%
%
%
%
%
%
\documentclass[a4paper]{article}
%
%
%
%
\usepackage{amsthm}
\usepackage{amssymb}
 \usepackage{mathptmx}      
%
%

\usepackage{amsmath}
\usepackage{graphicx}
\usepackage{float}
\usepackage{amsfonts}
\usepackage[numbers,sort&compress]{natbib}
\usepackage[colorlinks,
            linkcolor=blue,
            anchorcolor=blue,
            citecolor=blue
            ]{hyperref}
            


\def\g{\boldsymbol{g}}

\def\p{\boldsymbol{p}}

\def\x{\boldsymbol{x}}
\def\y{\boldsymbol{y}}

\def\f{\boldsymbol{f}}

\def\g{\boldsymbol{g}}

\def\v{\boldsymbol{v}}
\def\h{\boldsymbol{h}}

\def\r{\boldsymbol{r}}
\def\u{\boldsymbol{u}}
\def\1{\boldsymbol{1}}

\newtheorem{definition}{Definition}
\newtheorem{proposition}{Proposition}
\newtheorem{remark}{Remark}
\newtheorem{assumption}{Assumption}
\newtheorem{theorem}{Theorem}
\newtheorem{corollary}{Corollary}
\newtheorem{lemma}{Lemma}
\newtheorem{condition}{Condition}

%
%
\begin{document}

\title{\Large $L^1$-optimality conditions for circular restricted three-body problems
}


\author{Zheng Chen\thanks{Laboratoire de Math\'ematiqeus d'Orsay, Univ. Paris-Sud, CNRS, Universit\'e Paris-Saclay, 91405 Orsay, France. E-mail:{zheng.chen@math.u-psud.fr}           
}      
}

\maketitle

\begin{abstract}
In this paper, the $L^1$-minimization for the translational motion of a spacecraft in a circular restricted three-body problem (CRTBP) is considered. Necessary conditions are derived by using the Pontryagin Maximum Principle, revealing the existence of bang-bang and singular controls. Singular extremals are detailed, recalling the existence of the Fuller phenomena according to the theories developed by Marchal in Ref.~\cite{Marchal:73} and Zelikin {\it et al.} in Refs.~\cite{Zelikin:94,Zelikin:03}.  The sufficient optimality conditions for the $L^1$-minimization problem with  fixed endpoints have been solved in Ref.~\cite{Caillau:15}.  In this paper, through constructing a parameterised family of extremals, some second-order sufficient conditions are established not only for the case that the final point is fixed but also for the case that  the final point lies on a smooth submanifold. In addition, the numerical implementation for the optimality conditions is presented. Finally, approximating  the Earth-Moon-Spacecraft system as a CRTBP, an $L^1$-minimization trajectory for the translational motion of a spacecraft is computed by employing a combination of a shooting method with a continuation method of Caillau {\it et al.} in Refs.~\cite{Caillau:12,Caillau:12time}, and the local optimality of the computed trajectory is tested thanks to the second-order optimality conditions established in this paper.
\end{abstract}

\section{ Introduction}
\label{intro}

As an increasing number of artificial satellites or spacecrafts have been and are being launched into deeper space since 1960s, the problem of controlling the translational motion of a spacecraft in the gravitational field of multiple celestial bodies such that some cost functionals are minimized or maximized arises in astronautics. The circular restricted three-body problem (CRTBP), which though as a degenerate model in celestial mechanics can capture the chaotic property  of $n$-body problem, is extensively used in the literature in recent years to study optimal trajectories in deeper space.  The controllability properties for the translational motion in CRTBPs are studied by Caillau {\it et al.} in Ref. \cite{Caillau:12time}, showing that there  exist  admissible controlled trajectories in an appropriate subregion of state space. The present paper is concerned with the $L^1$-minimization problem for the translational motion of a spacecraft in a CRTBP, which aims at minimizing the $L^1$-norm of control. Therefore, if the control is generated by propulsion systems which expel mass in a high speed to generate an opposite reaction force according to Newton's third law of motion, the $L^1$-minimization problem is referred to as the well-known fuel-optimal control problem in astronautics. The existence of the $L^1$-minimization solutions in CRTBPs can be obtained by a combination of Filippov theorem in Ref. \cite{Agrachev:04} and the technique in Ref. \cite{Gergaud:06} if we assume that admissible controlled trajectories remain in a fixed compact, see Ref.~\cite{Caillau:12}.

While in the planar case where the translational motion is restricted in a 2-dimensional (2D) plane,  the singular extremals and the corresponding chattering arcs are analyzed by Zelikin and Borisov in Ref.~\cite{Zelikin:03}, the synthesis of the solutions of singular extremals in 3-dimensional (3D) case, to the author's knowledge, is not covered up to the present time. Therefore, in this paper, in addition to an emphasis on the necessary conditions arising from the Pontryagin Maximum Principle (PMP), which reveals the existences of bang-bang and singular controls,  the solutions of singular extremals are investigated to show that the $L^1$-minimization trajectories in 3D case can exhibit Fuller or chattering phenomena according to the theories developed by Marchal in Ref.~\cite{Marchal:73} as well as by Zelikin and Borisov in Ref.~\cite{Zelikin:94}.

Even though one does not consider singular and chattering controls, the bang-bang type of control as well as the chaotic property in CRTBPs makes the computation of the $L^1$-minimization solutions a big challenge. To address this challenge, various numerical methods, {\it e.g.,} direct methods~\cite{Mingotti:09,Ross:07}, indirect methods~\cite{Caillau:12,Caillau:12time}, and hybrid methods~\cite{Ozimek:10}, have been developed recently. In this paper,  the indirect method, proposed by Caillau {\it et al.} in Refs. \cite{Caillau:12,Caillau:12time} to combine a  shooting method with a  continuation method, is employed  to compute the extremal trajectories of the $L^1$-minimization problem.  Based on this method, some kinds of fuel-optimal trajectories in a CRTBP are computed recently as well in Ref. \cite{Zhang:15}. Whereas, one can notice that the extremal trajectories computed by this indirect method cannot be guaranteed to be at least locally optimal unless  sufficient optimality conditions are satisfied. Thus, it is indeed crucial to test sufficient conditions to check if a computed trajectory realizes a local optimality, which is what is missing in the research of optimal trajectories in CRTBPs.

The sufficient  conditions for optimal control problems are widely studied in the literature in recent years, see Refs.~\cite{Agrachev:02,Poggiolini:04,Schattler:12,Noble:02,Caillau:15,Agrachev:04,Kupka:87,Sarychev:82,Sussmann:85,Bonnard:07} and the references therein.  Through defining an accessory finite dimensional problem in Refs.~\cite{Agrachev:02,Poggiolini:04}, some  sufficient  conditions are developed for optimal control problems with a polyhedral control set. In Ref.~\cite{Caillau:15},  two no-fold conditions are established for the $L^1$-minimization problem, which generalises  the results of Refs.~\cite{Schattler:12,Noble:02}. Assuming the endpoints are fixed, these two no-fold conditions are sufficient to guarantee a bang-bang extremal of the $L^1$-minimization problem to be a strong local optimizer (cf. Subsection \ref{Subse:sufficient1}). Whereas, in addition to the two no-fold conditions, a third condition has to be established once  the dimension of the constraint submanifold  of final states is not zero, see Refs.~\cite{Agrachev:02,Brusch:70,Wood:74}. In this paper, a parameterized family of extremals around a given extremal is constructed such that the third condition is managed to be related with Jacobi field  under some regularity assumptions (cf. Subsection \ref{Subse:sufficient2}). Then, it is shown that the propagation of Jacobi field is enough to test the sufficient optimality conditions (cf. Sect. \ref{SE:Procedure}).



The paper is organized as follows. In Sect. \ref{SE:Problem_Formulation}, the $L^1$-minimization problem is formulated in CRTBPs. Then, the necessary conditions are derived with an emphasis on singular solutions in Sect. \ref{SE:Necessary}. In Sect. \ref{SE:Sufficient}, a parameterized family of extremals is first constructed. Under some regularity assumptions, the sufficient conditions for the strong-local optimality of the nonsingular extremals with bang-bang controls are established. In Sect. \ref{SE:Procedure}, a numerical implementation for the optimality conditions is derived.  In Sect. \ref{SE:Numerical}, consider the Earth-Moon-Spacecraft system as a CRTBP, a transfer trajectory of a spacecraft from a circular geosynchronous orbit of the Earth  to a circular orbit around the Moon  is calculated to provide a bang-bang extremal, whose local optimality is tested thanks to the second-order optimality conditions developed in this paper.

\section{Definitions and notations}\label{SE:Problem_Formulation}


A CRTBP in celestial mechanics is generally defined as an  isolated dynamical system consisting of three gravitationally interacting bodies, $P_1$, $P_2$, and $P_3$, whose masses are denoted by $m_1$, $m_2$, and $m_3$, respectively, such that 1) the third mass $m_3$ is so much smaller than the other two that its gravitational influence on the motions of the other two is negligible and 2) the  two bodies, $P_1$ and $P_2$, move on their own circular orbits around their common centre of mass. 
Without loss of generality, we assume $m_1 > m_2$ and consider a rotating frame $OXYZ$ such that its origin is located at the barycentre of the two bodies $P_1$ and $P_2$, see Fig.~\ref{Fig:rotating_frame}. 
\begin{figure}[!ht]
 \includegraphics[trim=4.0cm 1.5cm 3.0cm 1.5cm, clip=true, width=4in]{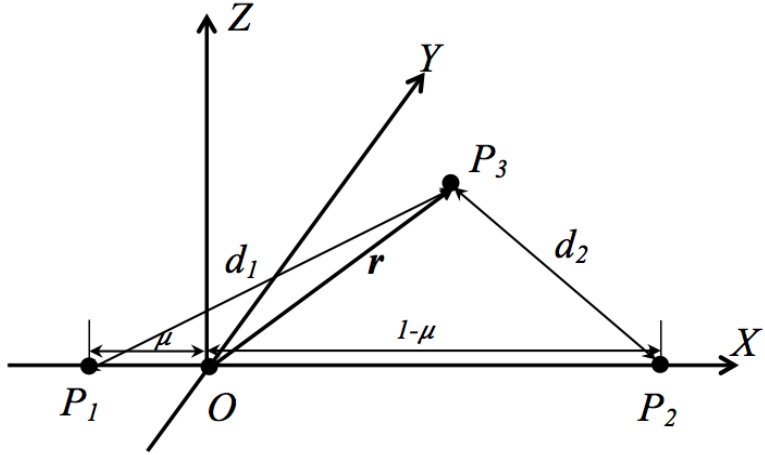}
 \caption[]{Rotating frame $OXYZ$ of the CRTBP.}
 \label{Fig:rotating_frame}
\end{figure}
The unit vector of $X$-axis is defined in such a way that it is collinear to the line between the two primaries $P_1$ and $P_2$ and points toward $P_2$, the unit vector of $Z$-axis is defined as the unit vector of the momentum vector of the motion of  $P_1$ and $P_2$, and the $Y$-axis is defined to complete a right-hand coordinate system.  It is advantageous to use non-dimensional parameters. Let $d_*$ be the distance between $P_1$ and $P_2$, and let $m_* = m_1 + m_2$, we denote by $d_*$ and $m_*$ the unit of length and mass, respectively. We also define the unit of time $t_*$ in such a way that the gravitational constant $G>0$ equals to one. Accordingly, one can obtain $$t_* = \sqrt{\frac{d_*^{3}}{Gm_*}}$$ through the usage of Kepler's third low. Then, denote by the superscript ``~$T$~" the transpose of matrices, if $\mu = m_2/m_*$, the two constant vectors $\r_1 = (-\mu,0,0)^T$ and $\r_2 = (1-\mu,0,0)^T$ denote the position of $P_1$ and $P_2$ in the rotating frame $OXYZ$, respectively.

\subsection{Dynamics}
{{In this paper, we denote the space of $n$-dimensional column vectors by $\mathbb{R}^n$ and the space of $n$-dimensional row vectors by $(\mathbb{R}^n)^*$.}} Let $t\in\mathbb{R}_+$ be the non-dimensional time and  let $\r \in \mathbb{R}^3$ and $\v  \in \mathbb{R}^3$ be the non-dimensional  position vector and velocity vector of $P_3$, respectively, in the rotating frame $OXYZ$.  Then, consider a spacecraft as the third mass point $P_3$ controlled by a finite-thrust propulsion system and let $m= m_3/m_*$, its state $\x\in\mathbb{R}^n$ ($n=7$) consists of position vector $\r$, velocity vector $\v$, and mass $m$, i.e., $\x = (\r,\v,m)$. Denote by the two constants $r_{m_1}>0$ and $r_{m_2}>0$ the radiuses of the two bodies $P_1$ and $P_2$, respectively, and denote by the constant $m_c>0$  the mass of the spacecraft without any fuel, we define the admissible subset for state $\x$ as
\begin{eqnarray}
\mathcal{X}=\big\{(\r,\v,m)\in\mathbb{R}^3 \times \mathbb{R}^3\times\mathbb{R}_+\ \arrowvert\   \|\r - \r_1\| > r_{m_1},\ \|\r-\r_2\| >r_{m_2},\ m \geq m_c\big\},\nonumber
\end{eqnarray}
where ``~$\|\cdot \|$~" denotes the Euclidean norm. Then, the differential equations for the controlled translational motion of the spacecraft in the CRTBP in the admissible set $\mathcal{X}$ for positive times can be written as
\begin{eqnarray}
\Sigma:
\begin{cases}
\dot{\r}(t) = \v(t),\\
\dot{\v}(t) = \h(\v(t)) + \g(\r(t))  + \frac{\boldsymbol{\tau}(t)}{m(t)},\\
\dot{m}(t) = -\beta{\parallel \boldsymbol{\tau}(t)\parallel},\label{EQ:mass}
\end{cases}
\label{EQ:Sigma}
\end{eqnarray}
with
\begin{eqnarray}
\h(\v) = \left[\begin{array}{ccc}
0 & 2 & 0\\
-2 & 0 & 0\\
0 & 0 & 0
\end{array}\right]\v, \ \boldsymbol{g}(\r) =  \left[\begin{array}{ccc}1& 0 & 0\\
0 & 1 & 0\\
0 & 0 & 0\end{array}\right] \r - \frac{1- \mu}{\|\r-\r_1\|^3}(\r - \r_1) - \frac{\mu}{\|\r -\r_2\|^3}(\r - \r_2),\nonumber
\end{eqnarray}
where   $\beta \geq 0$ is a scalar constant determined by the specific impulse of the engine equipped on the spacecraft and $\boldsymbol{\tau}\in\mathbb{R}^3$ is the thrust vector, taking values in 
\begin{eqnarray}
\mathcal{T} = \{\boldsymbol{\tau} \in\mathbb{R}^3\ \arrowvert \ \|\boldsymbol{\tau}\|\leq \tau_{max}\},\nonumber
\end{eqnarray}
where the constant $\tau_{max}>0$, in unit of ${m_*d_*}/{t_*^2}$, denotes the maximum magnitude of the thrust of the engine.


Denote by $\rho\in[0,1]$ the normalized mass flow rate of the engine, i.e., $\rho = \|\boldsymbol{\tau}\|/\tau_{max}$, and $\boldsymbol{\omega}\in\mathbb{S}^2$ the unit vector of the thrust direction, i.e., $\boldsymbol{\tau} = \rho\tau_{max}\boldsymbol{\omega}$, we then have that $\rho$ and $\boldsymbol{\omega}$ are control variables in the dynamics $\Sigma$ in Eq.~(\ref{EQ:Sigma}).  Let $\u=(\rho,\boldsymbol{\omega})$ and $\mathcal{U}=[0,1]\times\mathbb{S}^2$, we say $\mathcal{U}$ is the admissible set for the control $\u$.
Let us define the controlled vector field $\f$ on $\mathcal{X}\times\mathcal{U}$ by
\begin{eqnarray}
\f:\mathcal{X}\times\mathcal{U}\rightarrow \mathbb{R}^n,\ \f(\x,\rho,\boldsymbol{\omega}) = \f_0(\x) + \rho \f_1(\x,\boldsymbol{\omega}),\nonumber
\end{eqnarray}
where
\begin{eqnarray}
\f_0(\x) = \left(\begin{array}{c}
\v\\
\h(\v) + \g(\r)\\
{0}\end{array}\right),\ \ 
\f_1(\x,\boldsymbol{\omega}) = \left(\begin{array}{c}
\boldsymbol{0}\\
{\tau_{max}} \boldsymbol{\omega}/{m}\\
- {\tau_{max}}\beta  \end{array}\right).\nonumber
\end{eqnarray}
Then, the dynamics in Eq.~(\ref{EQ:Sigma}) can be rewritten as the control-affine form
\begin{eqnarray}
{\Sigma}:
\dot{\x}(t) =\f(\x(t),\rho(t),\boldsymbol{\omega}(t)) = \f_0(\x(t)) + \rho(t) \f_1(\x(t),\boldsymbol{\omega}(t)).
\label{EQ:system}
\end{eqnarray}

\subsection{$L^1$-minimization problem}

Given an $l\in\mathbb{N}$ such that $0< l \leq n$, we define the $l$-codimensional constraint submanifold on final state as
\begin{eqnarray}
\mathcal{M}=\{\x\in\mathcal{X}\ \arrowvert\ \phi(\x)=0\},
\label{EQ:final_manifold}
 \end{eqnarray}
where $\phi:\mathcal{X}\rightarrow \mathbb{R}^l$ denotes a twice continuously differentiable function of $\x$ and its expression depends on specific mission requirements, see an explicit example in Eq.~(\ref{EQ:function_phi}). Then, given a fixed initial state $\x_0\in\mathcal{X}$ and a fixed final time $t_f>0$, the $L^1$-minimization problem~\cite{Caillau:15} for the translational motion in the CRTBP consists of steering the system $\Sigma$ in $\mathcal{X}$ by a measurable control $(\rho(\cdot),\boldsymbol{\omega}(\cdot))\in\mathcal{U}$ on $[0,t_f]$  from the initial point $\x_0\in\mathcal{X}$ to a final point $\x_f\in\mathcal{M}$ such that the $L^1$-norm of control is minimized, i.e.,
\begin{eqnarray}
\int_0^{t_f} \rho(t) dt \rightarrow \text{min}.
 \label{EQ:cost_functional}
\end{eqnarray}
Note that the $L^1$-minimization problem is referred to as the fuel-minimum problem if $\beta > 0$. 

Controllability for the translational motion of the spacecraft in a CRTBP holds in an appropriate subregion of state space, see Ref.~\cite{Caillau:12}. Let $t_m > 0$ be the minimum time to steer the system $\Sigma$ by measurable controls $(\rho(\cdot),\boldsymbol{\omega}(\cdot))\in\mathcal{U}$ from the point $\x_0\in\mathcal{X}$ to a point $\x_f\in\mathcal{M}$. Then, assuming $t_f > t_m$ and that the admissible controlled trajectories of $\Sigma$ remain in a fixed compact, the existence of the $L^1$-minimization solutions can be obtained by  Filippov theorem \cite{Agrachev:04} since the convexity issues due to the $\rho$ term in the integrand of the cost in Eq.~(\ref{EQ:cost_functional}) can be dealt with as in Ref. \cite{Gergaud:06}. Therefore,  the PMP is applicable to formulate the following necessary conditions.



\section{Necessary conditions}\label{SE:Necessary}

\subsection{Pontryagin Maximum Principle}


According to the PMP in Ref. \cite{Pontryagin}, if a trajectory ${\x}(\cdot)\in\mathcal{X}$ associated with a measurable control ${\u}(\cdot)=(\rho(\cdot),\boldsymbol{\omega}(\cdot))$ in $\mathcal{U}$  on $[0,t_f]$ is an optimal one of the $L^1$-minimization problem, there exists a nonpositive real number $p^0$ and an absolutely continuous mapping ${t\mapsto\p(\cdot)\in T^*_{\x(\cdot)}\mathcal{X}}$ on $[0,t_f]$, satisfying $(\p,p^0) \neq 0$ and called adjoint state, such that almost everywhere on $[0,t_f]$   there holds
\begin{eqnarray}
\begin{cases}
\dot{\x}(t) = \frac{\partial H}{\partial \p}(\x(t),\p(t),p^0,\boldsymbol{u}(t)),\\
\dot{\p}(t) = -\frac{\partial H}{\partial \x}(\x(t),\p(t),p^0,\boldsymbol{u}(t)),
\end{cases}
\label{EQ:cannonical}
\end{eqnarray}
and
\begin{eqnarray}
H({\x}(t),{\p}(t),{p}^0,\boldsymbol{u}(t)) =\underset{\boldsymbol{\eta}(t)\in\mathcal{U}}{\text{max}} H({\x}(t),{\p}(t),{p}^0,\boldsymbol{\eta}(t)) ,
\label{EQ:maximum_condition}
\end{eqnarray}
where 
\begin{eqnarray}
H(\x,\p,p^0,\boldsymbol{u}) = \p\left[\f_0(\x) + \rho \f_1(\x,\boldsymbol{\omega})\right] + p^0 \rho,
\label{EQ:Hamiltonian}
\end{eqnarray}
 is the Hamiltonian. Moreover, the transversality condition asserts
\begin{eqnarray}
\boldsymbol{p}(t_f) =   \boldsymbol{\nu} d \phi(\x(t_f)),\label{EQ:Transversality_1}
\end{eqnarray}
where $\boldsymbol{\nu}\in(\mathbb{R}^l)^*$ is a constant vector whose elements are Lagrangian multipliers.

The 4-tuple $t\mapsto(\x(t),\p(t),p^0,\boldsymbol{u}(t))$  on $[0,t_f]$ is called an extremal.  Furthermore, an extremal is called a normal one if $p^0\neq 0$ and it is called an abnormal one if $p^0 = 0$. The abnormal extremals have been ruled out by Caillau {\it et al.} in Ref. \cite{Caillau:12}. Thus, in this paper only normal extremals are considered and $(\p,p^0)$ is normalized such that  $p^0 = -1$. According to the maximum condition in Eq.~(\ref{EQ:maximum_condition}), for every extremal $(\x(\cdot),\p(\cdot),p^0,\boldsymbol{u}(\cdot))$ on $[0,t_f]$, the corresponding extremal control  $\boldsymbol{u}(\cdot)$ is a function of $(\x(\cdot),\p(\cdot))$ on $[0,t_f]$, i.e., $\boldsymbol{u}(\cdot) = \boldsymbol{u}(\x(\cdot),\p(\cdot))$ on $[0,t_f]$. Thus, in the remainder of this paper, with some abuses of notations, we denote by $(\x(\cdot),\p(\cdot))\in T^*\mathcal{X}$ and $\boldsymbol{u}(\x(\cdot),\p(\cdot))\in\mathcal{U}$ on $[0,t_f]$ the normal extremal and the corresponding extremal control, respectively. And, we denote by ${H}(\x(\cdot),\p(\cdot))$ on $[0,t_f]$ the maximized Hamiltonian of the extremal $(\x(\cdot),\p(\cdot))$ on $[0,t_f]$, which is written as
\begin{eqnarray}
{H}(\x,\p) := {H}_0(\x,\p) + \rho(\x,\p) {H}_1(\x,\p),\nonumber
\end{eqnarray}
where ${H}_0 (\x,\p)  = \p\f_0(\x)$ and ${H}_1(\x,\p) = \p\f_1(\x,\boldsymbol{\omega}(\x,\p)) - 1$. 

Let us define by $\p_r\in T_{\r}\mathbb{R}^3$, $\p_v\in T_{\v}\mathbb{R}^3$, and $p_m\in T_{m}\mathbb{R}_+$ in such a way that $\p=(\p_r,\p_v,p_m)$, the maximum condition in Eq.~(\ref{EQ:maximum_condition}) implies 
\begin{eqnarray}
\boldsymbol{\omega}= \p_v / \parallel \p_v \parallel ,\  \text{if}\  \parallel \p_v\parallel\neq 0,
\label{EQ:Max_condition2}
\end{eqnarray}
and
\begin{eqnarray}
\begin{cases}\rho =1,\ \ \ \ \  \text{if}\ H_1 > 0,
\\
\rho = 0, \ \ \ \ \ \text{if}\ H_1 < 0.\\
\end{cases}
\label{EQ:Max_condition1}
\end{eqnarray}
Thus, the optimal direction of the thrust vector $\boldsymbol{\tau}$ is collinear to $\p_v$ that is well-known as the primer vector of Lawden~\cite{Lawden:63}. 
If the switching function $H_1$ has only isolated zeros along an extremal $(\x(\cdot),\p(\cdot))$ on $[0,t_f]$, this extremal is called a bang-bang one. 
\begin{definition}
Along a bang-bang extremal $(\x(\cdot),\p(\cdot))$ on $[0,t_f]$, an arc on a finite interval $[t_1,t_2]\subset [0,t_f]$ with $t_1 < t_2$ is called a maximum-thrust (or burn) arc if $\rho = 1$, otherwise it is called a zero-thrust (or coast) arc. 
\end{definition}

 \subsection{Singular solutions and chattering arcs}

An extremal $(\x(\cdot),\p(\cdot))$ on $[0,t_f]$ is said to be a singular one  if  $H_1(\x(\cdot),\p(\cdot)) \equiv 0$ for a finite interval $[t_1,t_2]\subseteq[0,t_f]$ with $t_1 < t_2$. Note that the maximum condition in Eq.~(\ref{EQ:maximum_condition}) is trivially satisfied for every $\rho \in[0,1]$ if $H_1 \equiv 0$. One can compute the optimal value of $\rho$ on singular arcs by repeatedly differentiating the identity $H_1 \equiv 0$ until $\rho$ explicitly appears. It is known from Ref.~\cite{Kelley:66} that  $\rho$ explicitly appears in the differentiation ${d^q H_1}/{d t^q}$ if and only if $q$ is an even integer, and the order of the singular arc is then designated as $q/2$.  
\begin{proposition}
Given a singular extremal $(\x(\cdot),\p(\cdot))$ on $[t_1,t_2]\subseteq[0,t_f]$ with $t_1 < t_2$, assume $\|\p_v (\cdot)\| \neq 0$ on $[t_1,t_2]$, we have that the order of the singular extremal is at least two.
\label{PR:singular_order}
\end{proposition}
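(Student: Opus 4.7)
The plan is to differentiate the switching function $H_1$ along the singular arc and show that $\rho$ does not appear explicitly in either its first or its second time derivative. Combined with the Kelley parity result cited from Ref.~\cite{Kelley:66} (which forces the smallest $q$ at which $\rho$ appears to be even), this will give $q\ge 4$ and hence singular order $q/2\ge 2$.

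Since $\|\p_v\|\neq 0$ on $[t_1,t_2]$, the maximum condition (\ref{EQ:Max_condition2}) produces the smooth feedback $\boldsymbol{\omega}^{*}(\x,\p)=\p_v/\|\p_v\|$. Substituting this into the pre-Hamiltonian yields the explicit expression
\[
H_1(\x,\p)=\frac{\tau_{max}\|\p_v\|}{m}-\tau_{max}\beta\,p_m-1,
\]
so that the maximized Hamiltonian $H=H_0+\rho H_1$ is affine in $\rho$. Along the extremal flow one has $\dot H_1=\{H_1,H_0\}+\rho\{H_1,H_1\}=\{H_1,H_0\}$, so no $\rho$ appears at first order. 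A direct computation of this bracket from $H_0=\p_r\cdot\v+\p_v\cdot(\h(\v)+\g(\r))$ is the next step: because $H_1$ involves only $(m,\p_v,p_m)$, most bracket terms drop out, and the antisymmetry of the matrix defining $\h(\v)$ annihilates the quadratic contribution $\p_v^{T}A^{T}\p_v$, leaving
\[
\{H_1,H_0\}=-\frac{\tau_{max}\,\p_v\cdot\p_r}{m\,\|\p_v\|}.
\]
The identity $H_1\equiv 0$ on the arc forces $\dot H_1\equiv 0$, producing the crucial relation $\p_v\cdot\p_r\equiv 0$ on $[t_1,t_2]$.

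Differentiating once more gives $\ddot H_1=\{\{H_1,H_0\},H_0\}+\rho\,\{\{H_1,H_0\},H_1\}$, and it suffices to show that the $\rho$-coefficient vanishes on the arc. The point is that $\{H_1,H_0\}$ depends only on $(m,\p_r,\p_v)$ while $H_1$ depends only on $(m,\p_v,p_m)$, so the only non-vanishing contribution to the double bracket $\{\{H_1,H_0\},H_1\}$ comes from the mixed $(m,p_m)$ partial-product; a one-line evaluation shows it equals a constant multiple of $\beta\,\p_v\cdot\p_r$, which is zero on the singular arc by the relation just obtained. The main obstacle is really only Poisson-bracket bookkeeping: once $H_1$ is written explicitly, the Coriolis antisymmetry is exploited, and the sparse phase-space dependence of $\{H_1,H_0\}$ and $H_1$ is used, the desired identity reduces to a single term whose vanishing is already encoded by $\p_v\cdot\p_r=0$ on the arc.
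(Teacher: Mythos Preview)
Your proof is correct and takes essentially the same route as the paper: compute the first bracket, observe that the $\rho$-coefficient in $\ddot H_1$ vanishes on the singular arc (being proportional to $\p_v\cdot\p_r$, which is zero there since $\dot H_1\equiv 0$), and invoke Kelley's parity. The paper additionally carries the computation further, evaluating $H_{001}$, $H_{0001}$, and $H_{10001}$ explicitly and showing that $H_{10001}$ does not vanish identically, so that the singular order is \emph{exactly} two; your argument stops at ``at least two'', which is all the proposition asserts.
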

\begin{proof}
Since $H_1 \equiv 0$ along a singular arc, differentiating $H_1$ with respect to time and using Poisson bracket, one obtains
\begin{eqnarray}
0 = H_{01}:=\big\{H_0,H_1\big\} = -\tau_{max}\frac{\p_v^T  [ \p_r + d{ \h(\v)} \p_v]}{m \parallel \p_v \parallel},
\label{EQ:H01}
\end{eqnarray}
where the notation ``~$\{\cdot,\cdot\}$~" denotes the Poisson bracket. Using Leibniz rule, Eq.~(\ref{EQ:H01}) implies
\begin{eqnarray}
H_{101} &:=& \big\{H_1,H_{01}\big\} = 0,\nonumber\\
H_{1001} &:=& \big\{H_1,\big\{H_0,H_{01}\big\}\big\}\nonumber\\
&=& \big\{-H_{01},H_{01}\big\} + \big\{H_0,H_{101}\big\} = 0.\nonumber
\end{eqnarray}
Then, the equality, $0 = H_{001} + \rho H_{101}$,
implies $H_{001} = 0$, whose implicit equation is
\begin{eqnarray}
H_{001} = \tau_{max} \frac{\p_v^T d\g(\r) \p_v + [\p_r + 2  d\h(\v)\p_v]^T[\p_r + d\h(\v)\p_v ] }{m \parallel \p_v \parallel}.\nonumber
\end{eqnarray}
 A direct calculation on this equation yields
\begin{eqnarray}
H_{0001} &:=& \big\{H_0,H_{001}\big\}\nonumber\\
 &=&\frac{\tau_{max}}{m\parallel \p_v \parallel} \Big\{  {{ \big[ \p_v^T d^2\g(\r) \p_v\big] }}\v - \p_v^T d\g(\r) [2\p_r + 3 d\h(\v) \p_v]\nonumber\\
 &-& [2  d\g(\r) \p_v + 3  d\h(\r)\p_r + 4(d\h(\v))^2\p_v  ]^T[\p_r +  d\h(\v)\p_v ]\Big\}.\nonumber
\end{eqnarray}
Eventually, one has $0 = \dot{H}_{0001} = H_{00001} + \rho H_{10001}$. Let
$\alpha_i$ ($i=1,2$) be defined by 
$$\cos(\alpha_i) = \frac{\p_v^T  (\r - \r_i)}{\parallel \p_v \parallel \parallel \r-\r_i \parallel},$$
 the explicit expression of $H_{10001}:=\{H_1,H_{0001}\}$, therefore, is
\begin{eqnarray}
H_{10001} &=&\tau_{max}  \frac{  \big[ \p_v^T d^2\g(\r) \p_v\big] \p_v}{m^2 \parallel \p_v \parallel^2}\nonumber\\
& =& 3\tau_{max} \frac{\parallel \p_v \parallel}{m^2} \left[\mu \cos \alpha_2\frac{3 - 5\cos^2 \alpha_2}{\parallel \r - \r_2 \parallel^4} + (1-\mu)\cos \alpha_1 \frac{3 - 5 \cos^2 \alpha_1}{\parallel \r - \r_1\parallel^4}\right].\nonumber
\end{eqnarray}
Note that the term $H_{10001}$ does not vanish identically on a singular extremal. Thus, the singular extremal is of order two according to Kelley's definition in Ref.~\cite{Kelley:66}, which proves the proposition. 
\end{proof}
This proposition for the 3D case expands the work in Ref.~\cite{Zelikin:03} where the motion of the spacecraft is restricted into a 2D plane and the work in Ref.~\cite{Robbins:65} where model of two-body problem ($\mu = 0$) is considered. 
 Note that Kelley's second-order necessary condition \cite{Kelley:66} in terms of  $\rho$ on singular arcs is $H_{10001} \leq 0$.
Let us define the singular submanifold $\mathcal{S}$ as
\begin{eqnarray}
\mathcal{S} = \big\{(\x,\p)\in T^*\mathcal{X}\ \arrowvert\ H_1=H_{01}=H_{001} = H_{0001} = 0,\  H_{10001} \leq 0\big\},\nonumber
\end{eqnarray}
we then obtain the following result.
\begin{corollary}[Fuller phenomenon, Zelikin and Borisov \cite{Zelikin:94}]
Let $\text{int}(\mathcal{S})$ be the interior of $\mathcal{S}$. Then, given every point $(\x,\p)\in\text{int}(\mathcal{S})$,  there exists a one parameter family of chattering solutions of Eqs.~(\ref{EQ:cannonical}--\ref{EQ:Hamiltonian}) passing through the point $(\x,\p)$ and another one parameter family of chattering solutions of Eqs.~(\ref{EQ:cannonical}--\ref{EQ:Hamiltonian}) coming out from the point $(\x,\p)$. 
\label{CO:fuller_phenomonon}
\end{corollary}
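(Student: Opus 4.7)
The plan is to verify the hypotheses of the Zelikin--Borisov chattering theorem of \cite{Zelikin:94} and then invoke it directly. The main structural input is already in hand from Proposition~\ref{PR:singular_order}: every singular extremal of the $L^1$-problem is of order exactly two, since the coefficient $H_{10001}$ of $\rho$ in the fourth time derivative of $H_1$ does not vanish identically. The definition of $\mathcal{S}$ packages the four vanishing conditions $H_1=H_{01}=H_{001}=H_{0001}=0$ together with Kelley's inequality $H_{10001}\le 0$, so the interior $\text{int}(\mathcal{S})$ is exactly where this inequality is strict. Strict Kelley together with order two is the standing hypothesis of the Zelikin--Borisov normal form.

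First I would check non-degeneracy: at a point $(\x,\p)\in \text{int}(\mathcal{S})$ the four functions $H_1, H_{01}, H_{001}, H_{0001}$ are functionally independent, so $\mathcal{S}$ is locally a smooth codimension-four submanifold of $T^*\mathcal{X}$ on which the singular control $\rho_{\mathrm{sing}}(\x,\p)=-H_{00001}/H_{10001}$ is smooth and lies strictly in the interior of $[0,1]$. Next I would exhibit a symplectic change of coordinates near $(\x,\p)$ using $H_1, H_{01}, H_{001}, H_{0001}$ (suitably normalised by $|H_{10001}|^{1/2}$) as half of the canonical variables and four conjugate variables completing the symplectic frame, with the remaining $2(n-4)$ coordinates parametrising $\mathcal{S}$ itself. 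In these coordinates the maximized Hamiltonian system \eqref{EQ:cannonical}, restricted to a neighbourhood of the singular arc and with $\rho\in\{0,1\}$, becomes the canonical Fuller Hamiltonian in the four transverse variables, plus a parameter-dependent smooth remainder that is tangent to $\mathcal{S}$.

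I would then apply the Zelikin--Borisov theorem: for the canonical order-two Fuller model, through each point of the singular manifold there is exactly one one-parameter family of chattering trajectories arriving at the manifold (infinitely many $\rho$-switchings accumulating in finite time from the past) and one such family emanating from it (accumulating in finite time into the future). The required persistence of these families under the smooth $\mathcal{S}$-tangent perturbation is standard: the Fuller attractor is normally hyperbolic in the transverse $4$-dimensional slice, so the stable and unstable chattering manifolds depend smoothly on the parameter along $\mathcal{S}$ and yield the two families of chattering solutions of \eqref{EQ:cannonical}--\eqref{EQ:Hamiltonian} claimed in the corollary.

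The main obstacle is the normal-form step: one has to certify that the symplectic reduction near an arbitrary point of $\text{int}(\mathcal{S})$ really matches the canonical Fuller Hamiltonian, which amounts to checking that the Poisson-bracket relations derived in the proof of Proposition~\ref{PR:singular_order} (in particular $H_{101}=H_{1001}=0$ and $H_{10001}<0$) give the correct symbol for the transverse part of the Hamiltonian vector field. Once this identification is made, no further work is needed beyond quoting \cite{Zelikin:94}; the restriction from $\mathcal{S}$ to $\text{int}(\mathcal{S})$ in the statement is precisely what guarantees the non-degeneracy needed for this identification.
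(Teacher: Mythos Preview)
The paper does not actually prove this corollary; it is stated without proof and attributed directly to Zelikin and Borisov~\cite{Zelikin:94}. The paper's implicit argument is simply: Proposition~\ref{PR:singular_order} shows that singular extremals are of intrinsic order two, and on $\text{int}(\mathcal{S})$ Kelley's condition holds strictly ($H_{10001}<0$), which is precisely the setting covered by the general Zelikin--Borisov chattering theory. Your proposal follows this same route---verify the order-two structure and strict Kelley, then invoke~\cite{Zelikin:94}---but you supply considerably more detail on the normal-form reduction than the paper does.

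Two minor cautions on your additional claims. First, you assert that $H_1,H_{01},H_{001},H_{0001}$ are functionally independent at every point of $\text{int}(\mathcal{S})$; this is plausible generically but is not automatic and the paper does not check it. Second, you assert that the singular control $\rho_{\mathrm{sing}}=-H_{00001}/H_{10001}$ lies strictly in $(0,1)$; this is not encoded in the definition of $\mathcal{S}$ or $\text{int}(\mathcal{S})$ and would need a separate argument or hypothesis. Neither point is addressed in the paper either, since the corollary is simply quoted rather than proved. Your overall strategy is correct and, if anything, more thorough than what the paper provides.
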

\noindent Though the efficient computation of chattering solutions is an open problem, see Ref.~\cite{Ghezzi:15}, Corollary \ref{CO:fuller_phenomonon} shows an insight into the control structure of the $L^1$-minimization trajectory, i.e., there exists a chattering arc when concatenating a singular arc with a nonsingular arc. The chattering arcs  may not be found by direct numerical methods when concatenating singular arcs with nonsingular arcs~\cite{Park:13}.

\section{Sufficient optimality conditions for bang-bang extremals}\label{SE:Sufficient}

Before studying the sufficient conditions for local optimality, we firstly give the definition of local optimality.
\begin{definition}[Local Optimality \cite{Poggiolini:04,Agrachev:02}] 
Given a fixed final time $t_f > 0$, an extremal trajectory $\bar{\x}(\cdot)\in \mathcal{X}$ associated with the extremal control $\bar{\boldsymbol{u}}(\cdot)=(\bar{\rho}(\cdot),\bar{\boldsymbol{\omega}}(\cdot))$ in $\mathcal{U}$ on $[0,t_f]$ is said to realize a weak-local optimality in $L^{\infty}$-topology (resp. a strong-local optimality in $C^0$-topology) if there exists an open neighborhood $\mathcal{W}_{\boldsymbol{u}}\subseteq \mathcal{U}$ of $\bar{\boldsymbol{u}}(\cdot)$ in $L^{\infty}$-topology (resp. an open neighborhood $\mathcal{W}_{\x}\subseteq \mathcal{X}$ of $\bar{\x}(\cdot)$ in $C^{0}$-topology) such that for every admissible controlled trajectory $\x(\cdot)\not\equiv\bar{\x}(\cdot)$ in $\mathcal{X}$ associated with the measurable control $\boldsymbol{u}(\cdot)=(\rho(\cdot),\boldsymbol{\omega}(\cdot))$ in $\mathcal{W}_{\boldsymbol{u}}$ on $[0,t_f]$ (resp.  for every admissible controlled trajectory $\x(\cdot)\not\equiv\bar{\x}(\cdot)$ in $\mathcal{W}_{\x}$ associated with the measurable control $\boldsymbol{u}(\cdot)=(\rho(\cdot),\boldsymbol{\omega}(\cdot)) $ in $\mathcal{U}$ on $[0,t_f]$)  with the boundary conditions $\x(0) = \bar{\x}(0)$ and $\x(t_f)\in\mathcal{M}$, there holds 
$$\int_{0}^{t_f}\rho(t) dt \geq \int_{0}^{t_f}\bar{\rho}(t) dt.$$
We say it realizes a strict weak-local (resp. strong-local) optimality if the strict inequality holds.
\label{DE:optimality}
\end{definition}
\noindent   Note that if a trajectory $\x(\cdot)\in\mathcal{X}$ on $[0,t_f]$ realizes a strong-local optimality, it automatically realizes a weak-local optimality. This section is concerned with establishing the sufficient conditions for the strong-local optimality.

\subsection{Parameterized family of extremals}

In this subsection, a family of extremals is constructed to be parameterized by $\p(0)\in T_{\x_0}^*\mathcal{X}$ such that the Poincar\'e-Cartan form $\p d\x - Hdt$ is exact on this family, which will be used to establish the sufficient optimality conditions later.

Let $\p_0 = \p(0)$, we define by
\begin{eqnarray}
\gamma:[0,t_f]\times T^*_{\x_0}\mathcal{X} \rightarrow T^*\mathcal{X},\ \gamma(t,\p_0) = (\x(t),\p(t)),\nonumber
\end{eqnarray}
the solution trajectory of Eqs.~(\ref{EQ:cannonical}--\ref{EQ:Hamiltonian}) such that $(\x_0,\p_0) = \gamma(0,\p_0)$. For every $\p_0\in T^*_{\x_0}\mathcal{X}$, we say $\gamma(\cdot,\p_0)$ on $[0,t_f]$ is an extremal. Note that at this moment we do not restrict any conditions on the final point of the extremal $\gamma(\cdot,\p_0)$ on $[0,t_f]$ for every $\p_0\in T^*\mathcal{X}$. 
\begin{definition}
We define $\bar{\p}_0\in T^*_{\x_0}\mathcal{X}$ in such a way that the extremal $\gamma(\cdot,\bar{\p}_0)$ at $t_f$ satisfies the final condition in Eq.~(\ref{EQ:final_manifold}) and transversality condition in Eq.~(\ref{EQ:Transversality_1}).
\end{definition}
\begin{definition}[Parameterized family of extremals]
Given the extremal $\gamma(\cdot,\bar{\p}_0)$ on $[0,t_f]$, let $\mathcal{P}\subset T_{\x_0}^*\mathcal{X}$ be an open neighbourhood of $\bar{\p}_0$, we say the subset
\begin{eqnarray}
\mathcal{F} = \big\{(\x(t),\p(t))\in T^*\mathcal{X}\ \arrowvert\ (\x(t),\p(t))=\gamma(t,\p_0),\ t\in[0,t_f],\ \p_0\in\mathcal{P}\big\},\nonumber
\end{eqnarray}
is a $\p_0$-parameterized family of extremals around the extremal $\gamma(\cdot,\bar{\p}_0)$ on $[0,t_f]$.
\end{definition}
\noindent Note that the open neighborhood $\mathcal{P}$ of $\bar{\p}_0$ in this paper can be shrunk whenever necessary.
Let
\begin{eqnarray}
\Pi: T^*\mathcal{X} \rightarrow \mathcal{X},\ \  (\x,\p) \mapsto \x,\nonumber
\end{eqnarray}
be the mapping that mapps a submanifold from the cotangent space $T^*\mathcal{X}$ onto  the state space $\mathcal{X}$, we say the mapping $\Pi$ is a canonical projection. 

An extremal ceases to be locally optimal if a focal point (or called a conjugate point if $l=n$ since in this case the endpoints are fixed) occurs~\cite{Bonnard:07}. According to Agrachev's approach in Ref.~\cite{Agrachev:04},   a focal point occurs on the extremal $\gamma(\cdot,\bar{\p}_0)$ at a time $t_c\in(0,t_f]$  if the projection of the family $\mathcal{F}$ loses its local diffeomorphism at $t_c$. We say the projection of the family $\mathcal{F}$ at  $t_c\in(0,t_f]$ is a fold singularity if it loses its local diffeomrophism at $t_c$. 
Thus, focal points are related to the fold singularities of the projection of the family $\mathcal{F}$.

\subsection{Sufficient conditions for the case of $l=n$}\label{Subse:sufficient1}

Given the extremal $(\bar{\x}(\cdot),\bar{\p}(\cdot)) = \gamma(\cdot,\bar{\p}_0)$ on $[0,t_f]$, without loss of generality, let the positive integer $k\in\mathbb{N}$ be the number of switching times $t_i$ ($i =1,2,\cdots,k$) such that $0 < t_1 < t_2 < \cdots < t_k < t_f$. 
\begin{assumption}
Along the extremal $(\bar{\x}(\cdot),\bar{\p}(\cdot)) = \gamma(\cdot,\bar{\p}_0)$ on $[0,t_f]$, each switching point at the switching time ${t}_i\in (0,t_f)$  is assumed to be a regular one, i.e., $H_1(\bar{\x}({t}_i),\bar{\p}({t}_i)) = 0$ and $H_{01}(\bar{\x}({t}_i),\bar{\p}({t}_i))\neq 0$ for $i=1,2,\cdots,k$.
\label{AS:Regular_Switching}
\end{assumption}
\noindent As a result of this assumption, if the subset $\mathcal{P}$ is small enough, the number of switching times on each extremal $\gamma(\cdot,\p_0)\in\mathcal{F}$ on $[0,t_f]$ keeps as $k$ and the $i$-th switching time of the extremals $\gamma(\cdot,\p_0)\in\mathcal{F}$ on $[0,t_f]$ is a smooth function of $\p_0$. Thus, we define by
\begin{eqnarray}
t_i: \mathcal{P}\rightarrow \mathbb{R}_+,\ \p_0 \mapsto t_i(\p_0),\nonumber
\end{eqnarray}
the $i$-th switching time of the  extremal $\gamma(\cdot,\p_0)\in\mathcal{F}$ on $[0,t_f]$.
Let 
\begin{eqnarray}
\mathcal{F}_i &=& \big\{(\x(t),\p(t))\in T^*\mathcal{X}\ \arrowvert\ \nonumber\\
&& (\x(t),\p(t)) = \gamma(t,\p_0),\ t \in (t_{i-1}(\p_0),t_i(\p_0)],\ \p_0 \in \mathcal{P}\big\}, \nonumber
\end{eqnarray}
for $\ i = 1,\ 2,\ \cdots,\ k,\ k+1$ with $t_0 = 0$ and $t_{k+1} = t_f$. If the subset $\mathcal{P}$ is small enough,  there holds
\begin{eqnarray}
\mathcal{F} = \mathcal{F}_1 \cup \mathcal{F}_2\cup \cdots \cup \mathcal{F}_k\cup \mathcal{F}_{k+1}.\nonumber
\end{eqnarray}
Let $(\x(\cdot,\p_0),\p(\cdot,\p_0))=\gamma(\cdot,\p_0)$ on $[0,t_f]$ be the extremals in $\mathcal{F}$. In order to avoid heavy notations, denote by $\delta(\cdot)$ the determinant of the matrix $\frac{\partial \x}{\partial \p_0}(\cdot,\bar{\p}_0)$ on $[0,t_f]$, i.e.,
$$\delta(\cdot) = \det\left[\frac{\partial \x}{\partial \p_0}(\cdot,\bar{\p}_0)\right],$$
on $[0,t_f]$. Note that the projection of the subset $\mathcal{F}_i$ at a time $t_c\in(t_i,t_{i+1})$ is a fold singularity  if $\delta(t_c) = 0$, as is shown by the typical picture for the occurrence of a conjugate point in Fig.~\ref{Fig:smooth_fold}. If $\delta(\cdot)\neq 0$ on $(t_i,t_{i+1})$, the projection of the subset $\mathcal{F}_i$ restricted to the domain $(t_i,t_{i+1})\times\mathcal{P}$  is a diffeomorphism, see Refs.~\cite{Schattler:12,Agrachev:04}. 
Let us define the following condition.
\begin{condition}
$\delta (\cdot) \neq 0$ on the open subintervals $({t}_i,{t}_{i+1})$ for $i=0,1,\cdots,k-1$ as well as on the semi-open subinterval $(t_{k},t_f]$.
\label{AS:Disconjugacy_bang}
\end{condition}
\begin{figure}[!ht]
 \includegraphics[trim=2.0cm 2.0cm 3.0cm 1.0cm, clip=true,  width=3.0in, angle=0]{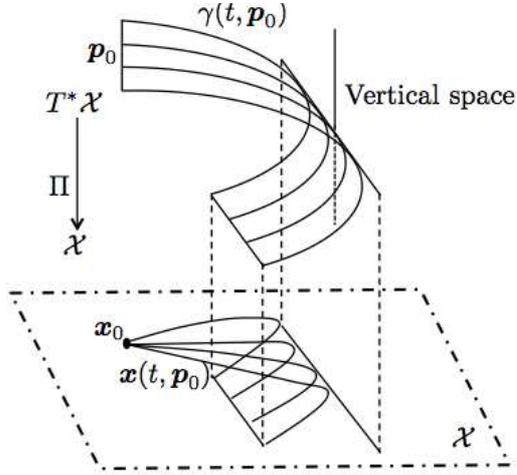}
\caption[]{A typical picture for a fold singularity of the projection of $\mathcal{F}$ onto the state space $\mathcal{X}$~\cite{Agrachev:04}.}
 \label{Fig:smooth_fold}
\end{figure}
\noindent Though this condition guarantees that both the restriction of $\Pi(\mathcal{F}_i)$ on $(t_{i-1},t_{i})\times\mathcal{P}$ for $i=1,2,\cdots,k$ and the restriction of $\Pi(\mathcal{F}_{k+1})$ on $(t_k,t_f]\times\mathcal{P}$  are local diffeomorphisms, it is not sufficient to guarantee that the projection of the family $\mathcal{F}$ restricted to the whole domain $(0,t_f]\times\mathcal{P}$ is a diffeomorphism as well, as Fig.~\ref{Fig:trans} shows that the flows $\x(t,\p_0)$ may intersect with each other near a switching time $t_i(\p_0)$. 
\begin{remark}
The behavior that the projection of $\mathcal{F}$ at a switching time $t_i$ is a fold singularity can be excluded by a transversal condition established by Noble and Sch\"attler in Ref.~\cite{Noble:02}. This transversal condition is reduced as $\delta(t_i-)\delta(t_i+)>0$ by Chen {\it et al.} in Ref.~\cite{Caillau:15}.
\end{remark}
\begin{figure}[!ht]
 \includegraphics[trim=2.0cm 1.5cm 1.0cm 1.0cm, clip=true, width=3.5in, angle=0]{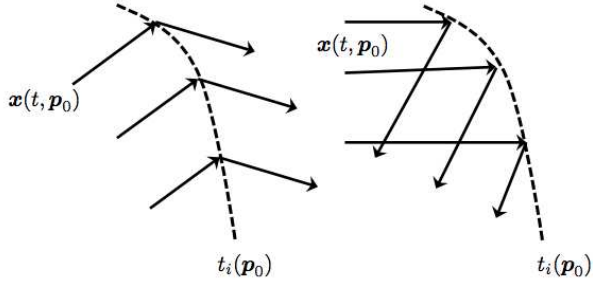}
\caption[]{The left plot denotes a diffeomorphism for the projection of $\mathcal{F}$ around the  switching time $t_i(\p_0)$, and the right plot denotes a fold singularity for the projection around the  switching time $t_i(\p_0)$ \cite{Noble:02,Schattler:12}.}
 \label{Fig:trans}
\end{figure}
\begin{condition}
$\delta (t_i-) \delta (t_{i}+) > 0$ for each switching time $t_i$ for $i=1,2,\cdots,k$.
\label{AS:Transversality}
\end{condition}
\noindent If this condition is satisfied, the projection  of the family $\mathcal{F}$ around each switching time $t_i(\p_0)$ is a diffeomorphism at least for a sufficiently small subset $\mathcal{P}$, see Ref.~\cite{Caillau:15}.
\begin{remark}
Given the extremal $(\bar{\x}(\cdot),\bar{\p}(\cdot)) = \gamma(\cdot,\bar{\p}_0)$ on $[0,t_f]$ such that every switching point is regular (cf. Assumption \ref{AS:Regular_Switching}) and  Conditions \ref{AS:Disconjugacy_bang} and \ref{AS:Transversality} are satisfied, if the subset $\mathcal{P}$ is small enough, every extremal $\gamma(\cdot,\p_0)$ on $[0,t_f]$ for $\p_0\in\mathcal{P}$ does not contain conjugate points. Then, for every $\p_0\in\mathcal{P}$,  we are able to 
construct a perturbed Lagrangian submanifold $\mathcal{L}_{\p_0}\subset T^*\mathcal{X}$ (cf. Theorem 21.3 in Ref.~\cite{Agrachev:04} or Appendix A in Ref.~\cite{Caillau:15}) around the extremal $\gamma(\cdot,\p_0)$ on $[0,t_f]$ such that 
\begin{description}
\item  $1)$ the projection  of the Lagrangian submanifold $\mathcal{L}_{\p_0}$ onto its image is a diffeomorphism; and
\item $2)$ the domain $\Pi(\mathcal{L}_{\p_0})$ is a tubular neighborhood of the extremal trajectory ${\x}(\cdot,\p_0)=\Pi(\gamma(\cdot,\p_0))$ on $[0,t_f]$.
\end{description}
\label{RE:lagrangian}
\end{remark}
\noindent As a result of this remark, one obtains the following remark.
\begin{remark}
If the subset $\mathcal{P}$ is small enough, let
\begin{eqnarray}
\mathcal{L} = \underset{\p_0\in\mathcal{P}}{\bigcap} \mathcal{L}_{\p_0},
\label{EQ:neighborhood}
\end{eqnarray}
it follows that
\begin{description}
\item $1)$  the projection of $\mathcal{L}$ onto its image is a diffeomorphism;
\item $2)$ the projection of $\mathcal{L}$ is a tubular neighborhood of the extremal trajectory $\Pi(\gamma(\cdot,\bar{\p}_0))$ on $[0,t_f]$; and
\item $3)$ there holds $\Pi(\mathcal{F})\subset \Pi(\mathcal{L})$ at every time $t\in[0,t_f]$.
\end{description}
\label{RE:neighborhood}
\end{remark}
\noindent Then, directly applying the theory of field of extremals (cf. Theorem 17.1 in Ref.~\cite{Agrachev:04}), one obtains the following result.
\begin{theorem}[Agrachev and Sachkov~\cite{Agrachev:04}]
Given the extremal $(\bar{\x}(\cdot),\bar{\p}(\cdot)) = \gamma(\cdot,\bar{\p}_0)$ on $[0,t_f]$ such that every switching point is regular (cf. Assumption \ref{AS:Regular_Switching}), let $(\rho(\cdot,\p_0),\boldsymbol{\omega}(\cdot,\p_0))\in\mathcal{U}$ be the optimal control function associated with the extremal $\gamma(\cdot,\p_0)\in\mathcal{F}$ on $[0,t_f]$. Then, if {\it Conditions \ref{AS:Disconjugacy_bang} and \ref{AS:Transversality}} are satisfied and if the subset $\mathcal{P}$ is small enough, every  extremal trajectory ${\x}(\cdot,\p_0)=\Pi(\gamma(\cdot,\p_0))$ on $[0,t_f]$ for $\p_0\in\mathcal{P}$ realizes a strict minimum cost among every admissible controlled trajectory $\x_*(\cdot)\in\Pi(\mathcal{L})$ associated with the measurable control $(\rho_*(\cdot),\boldsymbol{\omega}_*(\cdot))\in\mathcal{U}$ on $[0,t_f]$ with the same endpoints $\x(0,\p_0)=\x_*(0)$ and $\x(t_f,\p_0) = \x_*(t_f)$, i.e.,
\begin{eqnarray}
\int_0^{t_f}{\rho}(t,\p_0) dt \leq \int^{t_f}_0 \rho_*(t)dt,\nonumber
\end{eqnarray}
where the equality holds if and only if $\x_*(\cdot)\equiv \bar{\x}(\cdot)$ on $[0,t_f]$.
\label{CO:cor1}
\end{theorem}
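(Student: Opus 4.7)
The plan is to adapt the classical \emph{field of extremals} / Hilbert invariant integral argument. The construction in Remarks~\ref{RE:lagrangian}--\ref{RE:neighborhood} is specifically engineered so that, under Assumption~\ref{AS:Regular_Switching} together with Conditions~\ref{AS:Disconjugacy_bang}--\ref{AS:Transversality}, the Lagrangian submanifold $\mathcal{L}$ projects diffeomorphically onto a tubular neighborhood $\Pi(\mathcal{L})$ of the reference trajectory $\bar{\x}(\cdot)$, with $\Pi(\mathcal{F})\subset\Pi(\mathcal{L})$. This Mayer field is exactly what is needed to run a Weierstrass-type comparison between the cost along any extremal $\gamma(\cdot,\p_0)\in\mathcal{F}$ and the cost along an arbitrary admissible competitor that stays inside $\Pi(\mathcal{L})$.

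First I would exploit the fact that $\mathcal{L}$ is Lagrangian (isotropic for the natural symplectic form) to deduce that the Poincar\'e--Cartan one-form $\p\,d\x-H(\x,\p)\,dt$ is closed on the graph of the flow over $[0,t_f]$, hence (shrinking $\mathcal{P}$ if necessary) exact. Let $S$ be a primitive, piecewise smooth across the switching surfaces. Composed with the inverse of $\Pi|_{\mathcal{L}}$, the function $S$ becomes single-valued on a neighborhood of the reference trajectory in $\mathcal{X}\times[0,t_f]$. Evaluated along any extremal $\gamma(\cdot,\p_0)\in\mathcal{F}$, the algebraic identity $\p\,\dot{\x}-H=\rho$ (which follows at once from $H=\p\f_0+\rho(\p\f_1-1)$) yields
\begin{equation*}
\int_0^{t_f}\rho(t,\p_0)\,dt \;=\; S\bigl(\x(t_f,\p_0),t_f\bigr)-S(\x_0,0).
\end{equation*}

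Next, for any admissible $\x_*(\cdot)\in\Pi(\mathcal{L})$ on $[0,t_f]$ steered by $(\rho_*,\boldsymbol{\omega}_*)\in\mathcal{U}$ with the same endpoints as $\gamma(\cdot,\p_0)$, I lift $\x_*$ back to $\mathcal{L}$ via $(\Pi|_{\mathcal{L}})^{-1}$ to obtain an adjoint $\p_*(t)$. Exactness of $dS$ gives
\begin{equation*}
\int_0^{t_f}\bigl[\p_*(t)\,\dot{\x}_*(t)-H(\x_*(t),\p_*(t))\bigr]\,dt \;=\; S\bigl(\x(t_f,\p_0),t_f\bigr)-S(\x_0,0).
\end{equation*}
A short computation using $\dot{\x}_*=\f_0(\x_*)+\rho_*\f_1(\x_*,\boldsymbol{\omega}_*)$ rewrites the integrand as $\rho_*(t)-E(\x_*,\p_*,\rho_*,\boldsymbol{\omega}_*)$, where
\begin{equation*}
E(\x_*,\p_*,\rho_*,\boldsymbol{\omega}_*) \;:=\; H(\x_*,\p_*)-\bigl[\p_*\f_0(\x_*)+\rho_*(\p_*\f_1(\x_*,\boldsymbol{\omega}_*)-1)\bigr]
\end{equation*}
is the Weierstrass excess, nonnegative everywhere by the pointwise maximum condition in Eq.~(\ref{EQ:maximum_condition}). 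Comparing the two displays yields $\int_0^{t_f}\rho_*\,dt\geq\int_0^{t_f}\rho(t,\p_0)\,dt$. Equality would force $E\equiv 0$ a.e., hence $(\rho_*,\boldsymbol{\omega}_*)$ coincides with the extremal feedback $(\rho,\boldsymbol{\omega})(\x_*,\p_*)$ almost everywhere; then $\x_*$ and $\Pi(\gamma(\cdot,\p_0))$ satisfy the same ODE with the same initial condition and must agree, giving the strict inequality claimed.

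The main obstacle is not the Weierstrass step, which is formal, but rather ensuring that the primitive $S$ is well-defined across the jumps in the extremal control at the switching times $t_i(\p_0)$. This is precisely where Condition~\ref{AS:Transversality} enters: it guarantees that adjacent pieces $\mathcal{L}\cap\mathcal{F}_i$ and $\mathcal{L}\cap\mathcal{F}_{i+1}$ glue into a single manifold whose projection remains a diffeomorphism through each switching surface, so the jump in the maximized Hamiltonian $H$ is compensated exactly by the jump in $\partial_t S$, leaving $S$ continuous. Once this gluing is in hand (granted by the construction behind Remarks~\ref{RE:lagrangian}--\ref{RE:neighborhood}, itself a direct appeal to Theorem~21.3 of Ref.~\cite{Agrachev:04} and Appendix~A of Ref.~\cite{Caillau:15}), the Hilbert invariant integral is path-independent on $\Pi(\mathcal{L})$ and the strict strong-local optimality follows.
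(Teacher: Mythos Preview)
Your argument is correct and is precisely the field-of-extremals / Weierstrass-excess machinery underlying Theorem~17.1 of Ref.~\cite{Agrachev:04}, which the paper simply invokes as a black box (applied to each $\mathcal{L}_{\p_0}$, then restricted to the common tube via the inclusion $\Pi(\mathcal{L})\subseteq\Pi(\mathcal{L}_{\p_0})$). One small inaccuracy worth correcting: the maximized Hamiltonian $H=H_0+\rho H_1$ does \emph{not} jump at a switching time since $H_1=0$ there, so the gluing issue for $S$ is purely the continuity of the lifted section across the moving switching surfaces---exactly what Condition~\ref{AS:Transversality} secures.
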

\begin{proof}
According to Theorem $17.1$ in Ref.~\cite{Agrachev:04}, under the hypotheses of this theorem, every extremal trajectory $\x(\cdot,\p_0)$ on $[0,t_f]$ for $\p_0\in\mathcal{P}$ realizes a strict minimum cost among every admissible controlled trajectory $\x_*(\cdot)\in\Pi(\mathcal{L}_{\p_0})$ on $[0,t_f]$ with the same endpoints. Notice from Eq.~(\ref{EQ:neighborhood}) that  $\Pi(\mathcal{L})\subseteq\Pi(\mathcal{L}_{\p_0})$ at each time $t\in[0,t_f]$ for every $\p_0\in\mathcal{P}$, one proves this theorem. 
\end{proof}
\noindent Note that the endpoints of the $L^1$-minimization problem are fixed if $l=n$.
\begin{remark}
As a combination of Remark \ref{RE:neighborhood} and Theorem  \ref{CO:cor1}, one obtains that {\it Conditions} \ref{AS:Disconjugacy_bang} and \ref{AS:Transversality} are sufficient to guarantee the extremal trajectory $\bar{\x}(\cdot)$ on $[0,t_f]$ is a strict strong-local optimum (cf. Definition \ref{DE:optimality}) if $l=n$.  
\end{remark}
Under Assumption \ref{AS:Regular_Switching}, the projection of the family $\mathcal{F}$ near the switching time $t_i(\p_0)$ is a fold singularity if the strict inequality
$\delta (t_i-)\delta (t_{i}+) < 0$
is satisfied~\cite{Caillau:15}.
\begin{remark}
Given the extremal $\gamma(\cdot,\bar{\p}_0)$ on $[0,t_f]$ such that  each switching point is regular (cf. Assumption~\ref{AS:Regular_Switching}), conjugate points can occur not only on each smooth bang arc at a time $t_c\in (t_{i-1},t_i)$ if $\delta (t_c) = 0$ but also at each switching time $t_i$ if $\delta (t_i-) \delta (t_{i}+) < 0$.
\end{remark} 
\noindent The fact that conjugate points can occur at switching times generalizes the conjugate point theory developed by the classical variational methods for totally smooth extremals, see  Refs.~\cite{Bryson:69,Breakwell:65,Mermau:76,Wood:74}.

\subsection{Sufficient conditions for the case of $l<n$}\label{Subse:sufficient2}

In this subsection, we establish the sufficient optimality conditions for the case that the dimension of the final constraint submanifold $\mathcal{M}$ is not zero.

\begin{remark}
 If $l<n$,  to ensure the extremal trajectory $\bar{\x}(\cdot)$ on $[0,t_f]$ is a strict strong-local optimum, in addition to Conditions \ref{AS:Disconjugacy_bang} and \ref{AS:Transversality}, a further second-order condition (cf. Refs.~\cite{Wood:74,Brusch:70}) is required  to guarantee that every admissible controlled trajectory $\x_*(\cdot)\in\Pi(\mathcal{L})$ on $[0,t_f]$, not only with the same endpoints $\bar{\x}(0)=\x_*(0)$ and $\bar{\x}(t_f) = \x_*(t_f)$ but also with the boundary conditions $\bar{\x}(0)=\x_*(0)$ and $\x_*(t_f)\in\mathcal{M}\backslash\{\bar{\x}(t_f)\}$, has a bigger cost than the extremal trajectory $\bar{\x}(\cdot)$ on $[0,t_f]$. 
 \end{remark}
\noindent Let $\mathcal{N}\subset\mathcal{X}$ be the restriction of $\Pi(\mathcal{F})$ on $\{t_f\}\times\mathcal{P}$, i.e.,
$$\mathcal{N} = \big\{\x\in\mathcal{X}\ \arrowvert\ \x = \Pi(\gamma(t_f,\p_0)),\ \p_0\in\mathcal{P}\big\}.$$
Note that the mapping $\p_0\mapsto \x(t_f,\p_0)$ on the sufficiently small subset $\mathcal{P}$ is a diffeomorphism if $\delta (t_f)\neq 0$, which indicates that the subset $\mathcal{N}$ is an open neighborhood of $\bar{x}({t}_f)$ if Condition \ref{AS:Disconjugacy_bang} is satsfied. Thus, in the case of $l<n$, the subset $\mathcal{M}\cap \mathcal{N}\backslash\{\bar{\x}(t_f\}$ is not empty if $\delta(t_f)\neq 0$, see the sketch for a 2-dimensional state space in Fig~\ref{Fig:terminal_transversality}.
\begin{figure}[!ht]
\includegraphics[trim=1cm 1cm 1cm 1cm, clip=true,width=0.8\textwidth]{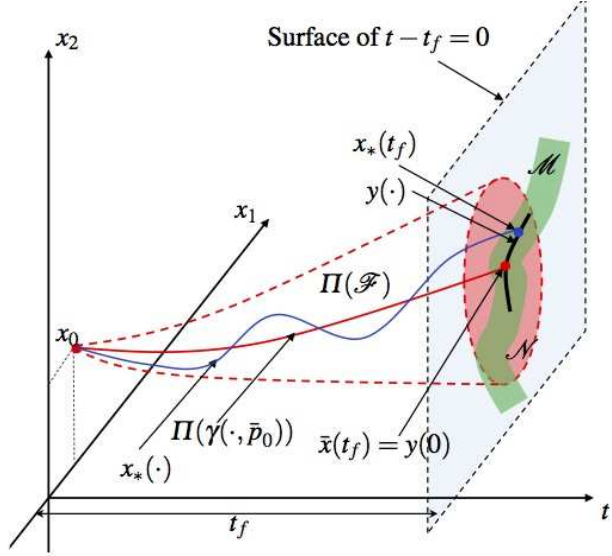}
 \caption[]{The relationship between $\mathcal{N}$ and $\mathcal{M}$.}
 \label{Fig:terminal_transversality}
\end{figure}
\let\emptyset\varnothing
For every sufficiently small subset $\mathcal{P}$, let us define by $\mathcal{Q}\subseteq \mathcal{P}$ a subset of all $\p_0\in\mathcal{P}$ satisfying $\Pi(\gamma(t_f,\p_0))\in\mathcal{M}\cap\mathcal{N}$, i.e.,
\begin{eqnarray}
\mathcal{Q} = \big\{\p_0\in\mathcal{P}\ \arrowvert\ \Pi(\gamma(t_f,\p_0))\in\mathcal{M}\cap\mathcal{N}\big\}.\nonumber
\end{eqnarray}
\noindent Note that for every $\p_0\in\mathcal{Q}$ there holds $\x_0 =\Pi(\gamma(0,\p_0))$ and $\Pi(\gamma(t_f,\p_0))\in\mathcal{M}$.
\begin{remark}
For every $\p_0\in\mathcal{Q}$, the extremal trajectory $\x(\cdot,\p_0) = \Pi(\gamma(\cdot,\p_0))$ on $[0,t_f]$  is an admissible controlled trajectory of the $L^1$-minimization problem.
\label{RE:admissible_control_trajectory}
\end{remark}
\begin{definition}
Given the extremal $(\bar{\x}(\cdot),\bar{\p}(\cdot)) = \gamma(\cdot,\bar{\p}_0)$ on $[0,t_f]$ and  a small $\varepsilon > 0$,  let $l< n$. Then, we define by $\y: [-\varepsilon,\varepsilon]\rightarrow \mathcal{M}\cap\mathcal{N},\ \eta\mapsto \y(\eta)$ a twice continuously differentiable curve on $\mathcal{M}\cap\mathcal{N}$ such that $\y(0)=\bar{\x}(t_f)$.
\label{DE:smooth_curve}
\end{definition}
\begin{lemma}
Given the extremal $\gamma(\cdot,\bar{\p}_0)$ on $[0,t_f]$ such that each switching point  is regular (cf. Assumption \ref{AS:Regular_Switching}) and {\it Conditions} \ref{AS:Disconjugacy_bang} and \ref{AS:Transversality} are satisfied, let $l<n$. Then, if the subset $\mathcal{P}$ is small enough, for every smooth curve $\y(\cdot)\in\mathcal{M}\cap\mathcal{N}$ on $[-\varepsilon,\varepsilon]$, there exists a smooth path $\eta\mapsto \p_0(\eta)$ on $[-\varepsilon,\varepsilon]$ in $\mathcal{Q}$  such that $\y(\cdot) = \Pi(\gamma(t_f,\p_0(\cdot)))$ on $[-\varepsilon,\varepsilon]$. 
\label{LE:smooth_path}
\end{lemma}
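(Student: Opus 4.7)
The plan is to construct $\p_0(\cdot)$ by locally inverting the endpoint mapping $E:\mathcal{P}\to \mathcal{X}$, $E(\p_0) := \Pi(\gamma(t_f,\p_0))$. Once $E$ has been shown to be a $C^2$-diffeomorphism from a sufficiently shrunk $\mathcal{P}$ onto a neighborhood of $\bar{\x}(t_f)$, I simply set $\p_0(\eta) := E^{-1}(\y(\eta))$ for $\eta \in [-\varepsilon,\varepsilon]$. The curve $\y(\cdot)$ takes values in $\mathcal{M}\cap\mathcal{N} \subseteq \mathcal{N}=E(\mathcal{P})$, so $\p_0(\cdot)$ is well defined; twice continuous differentiability follows because it is the composition of the $C^2$ curve $\y(\cdot)$ (Definition \ref{DE:smooth_curve}) with the $C^2$ local inverse of $E$; and by construction $\Pi(\gamma(t_f,\p_0(\eta))) = E(E^{-1}(\y(\eta))) = \y(\eta)\in \mathcal{M}\cap\mathcal{N}$, hence $\p_0(\eta)\in\mathcal{Q}$ as required.

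It then remains to verify that $E$ is a local $C^2$-diffeomorphism at $\bar{\p}_0$. The first step is $C^2$ regularity. Because the extremal $\gamma(\cdot,\bar{\p}_0)$ is piecewise smooth with switching times $t_1<\cdots<t_k$, I apply the implicit function theorem to the switching equation $H_1(\gamma(t,\p_0))=0$: by Assumption \ref{AS:Regular_Switching}, $H_{01}(\bar{\x}(t_i),\bar{\p}(t_i))\neq 0$, which is precisely $\tfrac{d}{dt}H_1(\gamma(t,\bar{\p}_0))$ evaluated at $t=t_i$, so each switching time extends to a $C^2$ function $\p_0\mapsto t_i(\p_0)$ on a sufficiently small $\mathcal{P}$ with the total count remaining equal to $k$. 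The endpoint $\x(t_f,\p_0)$ is then obtained by concatenating the smooth flows of the Hamiltonian vector fields corresponding to the coast ($\rho=0$) and burn ($\rho=1$) arcs on the successive subintervals delimited by these smooth switching times; each such composition is $C^2$ in $\p_0$, hence so is $E$.

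The second step produces the inverse. Condition \ref{AS:Disconjugacy_bang} applied to the last subinterval $(t_k,t_f]$ gives $\delta(t_f)\neq 0$, i.e.\ the Jacobian $\partial E/\partial \p_0(\bar{\p}_0) = \partial \x/\partial \p_0(t_f,\bar{\p}_0)$ is invertible. The inverse function theorem then yields, after a further shrinking of $\mathcal{P}$ if necessary, a $C^2$-diffeomorphism $E:\mathcal{P}\to\mathcal{N}$ with $\mathcal{N}$ an open neighborhood of $\bar{\x}(t_f)$, exactly as needed to carry out the inversion in the first paragraph. The main obstacle is the bookkeeping in the regularity step: since the controlled vector field jumps at each $t_i(\p_0)$, one must check carefully that the continuity of $H$ across switchings (a consequence of $H_1=0$ there) suffices to preserve $C^2$ dependence of the concatenated flow on $\p_0$. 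Condition \ref{AS:Transversality} is not invoked directly in this lemma, but implicitly validates treating $\mathcal{Q}$ as a subset coherent with the perturbed Lagrangian construction of Remark \ref{RE:lagrangian}, which is where the lemma will be applied downstream.
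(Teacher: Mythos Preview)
Your proposal is correct and follows essentially the same approach as the paper: both arguments invert the endpoint map $\p_0\mapsto \x(t_f,\p_0)$ via the inverse function theorem, using $\delta(t_f)\neq 0$ from Condition~\ref{AS:Disconjugacy_bang}. The paper's proof is a two-line appeal to this invertibility, whereas you additionally spell out the $C^2$ regularity of the endpoint map through the smooth dependence of the switching times on $\p_0$ (guaranteed by Assumption~\ref{AS:Regular_Switching}); this extra detail is sound and fills in what the paper leaves implicit.
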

\begin{proof}
Note that the mapping $\p_0\mapsto \x(t_f,\p_0)$ restricted to the subset $\mathcal{Q}$ is a diffeomorphism under the hypotheses of the lemma. Then, according to the {\it inverse function theorem}, the lemma is proved. 
\end{proof}

\begin{definition}
Define a path ${\boldsymbol{\lambda}}:[-\varepsilon,\varepsilon]\rightarrow T^*_{\y(\cdot)}\mathcal{X},\ \eta\mapsto \boldsymbol{\lambda}(\eta)$ in such a way that $(\y(\cdot),{\boldsymbol{\lambda}}(\cdot)) = \gamma(t_f,\p_0(\cdot))$ on $[-\varepsilon,\varepsilon]$. Then, for every $\xi\in[-\varepsilon,\varepsilon]$, we define by $J:[-\varepsilon,\varepsilon]\rightarrow \mathbb{R},\ \xi\mapsto J(\xi)$  the  integrand of the Poincar\'e-Cartan form $\p d\x - H dt$ along the extremal lift $(\y(\cdot),\boldsymbol{\lambda}(\cdot))$ on $[0,\xi]$, i.e., 
\begin{eqnarray}
J(\xi) = \int_0^{\xi}\boldsymbol{\lambda}(\eta)\y^{\prime}(\eta) - H(\y(\eta),\boldsymbol{\lambda}(\eta))\frac{d t_f}{d\eta}d\eta,\ \xi\in[-\varepsilon,\varepsilon].
\label{EQ:J_xi}
\end{eqnarray}
\end{definition}
\begin{proposition}
In the case of $l<n$, given the extremal $(\bar{\x}(\cdot),\bar{\p}(\cdot)) = \gamma(\cdot,\bar{\p}_0)$ on $[0,t_f]$ such that each switching point  is regular (cf. Assumption \ref{AS:Regular_Switching}) and {\it Conditions} \ref{AS:Disconjugacy_bang} and \ref{AS:Transversality} are satisfied, assume $\varepsilon > 0$ is small enough. Then, the extremal trajectory $\bar{\x}(\cdot)$ on $[0,t_f]$ is a strict strong-local optimality (cf. Definition \ref{DE:optimality}) if and only if there holds 
\begin{eqnarray}
J(\xi) > J(0),\ \xi\in[-\varepsilon,\varepsilon]\backslash\{0\},
\label{EQ:lemma1_compare}
\end{eqnarray}
for every smooth curve $\y(\cdot)\in\mathcal{M}\cap\mathcal{N}$ on $[-\varepsilon,\varepsilon]$.
\label{CO:J_xi_J_0}
\end{proposition}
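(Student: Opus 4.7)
The plan is to reduce the proposition to Theorem \ref{CO:cor1} via the identity
\begin{equation}
\int_0^{t_f}\rho(t,\p_0(\xi))\,dt - \int_0^{t_f}\bar\rho(t)\,dt = J(\xi),\qquad \xi\in[-\varepsilon,\varepsilon].
\label{EQ:plan_cost_identity}
\end{equation}
Since $H = \p\f_0 + \rho(\p\f_1 - 1)$, the pointwise relation $\rho = \p\dot\x - H$ holds along every extremal, so the left-hand side of \eqref{EQ:plan_cost_identity} equals the difference of the Poincar\'e-Cartan 1-form $\alpha := \p\,d\x - H\,dt$ integrated along the two extremals $\gamma(\cdot,\p_0(\xi))$ and $\gamma(\cdot,\bar\p_0)$ on $[0,t_f]$. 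Because the final time is fixed, $dt_f/d\eta \equiv 0$ in \eqref{EQ:J_xi}, and hence $J(\xi)$ is itself $\int\alpha$ along the terminal curve $\eta\mapsto(\y(\eta),\boldsymbol\lambda(\eta))$. A standard Stokes argument on the rectangle $[0,t_f]\times[0,\xi]$ applied to the pullback of $\alpha$ under $(t,\eta)\mapsto(\gamma(t,\p_0(\eta)),t)$ then produces \eqref{EQ:plan_cost_identity}: the sides $\eta=0$ and $\eta=\xi$ contribute the two extremal costs with opposite orientations, the side $t=t_f$ contributes $J(\xi)$, the side $t=0$ contributes zero since $\x_0$ and $t=0$ are $\eta$-independent, and the interior integral $\int d\alpha$ vanishes on each smooth strip by a direct Hamilton-equation calculation.

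Granted \eqref{EQ:plan_cost_identity}, the proposition follows from Theorem \ref{CO:cor1}. For sufficiency, let $\x_*(\cdot)\in\Pi(\mathcal{L})$ be any admissible trajectory with $\x_*(0)=\x_0$, $\x_*(t_f)\in\mathcal{M}$, and $\x_*\not\equiv\bar\x$. If $\x_*(t_f)=\bar\x(t_f)$, Theorem \ref{CO:cor1} yields the strict inequality directly. Otherwise, Condition \ref{AS:Disconjugacy_bang} at $t_f$ makes the mapping $\p_0\mapsto\Pi(\gamma(t_f,\p_0))$ a diffeomorphism on $\mathcal{P}$, so after shrinking $\mathcal{P}$ and $\mathcal{W}_{\x}$ we have $\x_*(t_f)\in\mathcal{N}$, and Lemma \ref{LE:smooth_path} produces a smooth curve $\y(\cdot)$ in $\mathcal{M}\cap\mathcal{N}$ with $\y(\xi)=\x_*(t_f)$ for some $\xi\neq 0$. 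Applying Theorem \ref{CO:cor1} to the endpoints $\x_0$ and $\y(\xi)$ and invoking \eqref{EQ:plan_cost_identity} yields $\int_0^{t_f}\rho_*\,dt \geq \int_0^{t_f}\bar\rho\,dt + J(\xi) > \int_0^{t_f}\bar\rho\,dt$ whenever $J(\xi)>0$. For necessity, Remark \ref{RE:admissible_control_trajectory} says each $\x(\cdot,\p_0(\xi))$ with $\xi\neq 0$ is itself admissible and distinct from $\bar\x(\cdot)$, so strict strong-local optimality combined with \eqref{EQ:plan_cost_identity} forces $J(\xi) > 0 = J(0)$.

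The main obstacle lies in the Stokes step when $k\geq 1$, since the extremal control $\rho(\cdot,\p_0)$ jumps at each switching time $t_i(\p_0)$ and the 2-cell is therefore only piecewise smooth. The hypotheses are precisely what is needed here: Assumption \ref{AS:Regular_Switching} makes each $t_i(\cdot)$ smooth in $\p_0$, the state, costate and maximized Hamiltonian are continuous across switchings (the Hamiltonian because $H_1=0$ there), so the pullback of $\alpha$ has matching traces on either side of each switching curve, and Conditions \ref{AS:Disconjugacy_bang} and \ref{AS:Transversality} together ensure that $\mathcal{F}$ is a genuine smooth field of extremals across switchings---so the interior switching-curve contributions cancel in pairs and $\int d\alpha = 0$ holds globally on the 2-cell. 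Once this is established, the reduction to Theorem \ref{CO:cor1} is essentially bookkeeping.
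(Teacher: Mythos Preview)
Your proposal is correct and follows essentially the same route as the paper's own proof: both establish the identity $\int_0^{t_f}\rho(t,\p_0(\xi))\,dt-\int_0^{t_f}\bar\rho(t)\,dt=J(\xi)$ by integrating the Poincar\'e--Cartan form $\p\,d\x-H\,dt$ around the closed loop formed by the two extremals, the terminal curve $(\y,\boldsymbol\lambda)$, and the (trivial) initial curve, and then reduce both the sufficiency and the necessity directions to Theorem~\ref{CO:cor1}. The only stylistic difference is that you spell out the Stokes step and the cancellation along the switching curves explicitly, whereas the paper simply invokes the closedness of $\p\,d\x-H\,dt$ on $\mathcal{F}$ by citing \cite{Agrachev:04,Schattler:12,Caillau:15}; in particular, Conditions~\ref{AS:Disconjugacy_bang} and~\ref{AS:Transversality} are not actually needed for that closedness (continuity of $\x,\p,H$ across switchings and Assumption~\ref{AS:Regular_Switching} suffice), they are needed only to invoke Theorem~\ref{CO:cor1}.
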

\begin{proof}
Let us first prove that, under the hypotheses of this proposition, Eq.~(\ref{EQ:lemma1_compare}) is a sufficient condition for the strict strong-local optimality of the extremal trajectory $\bar{\x}(\cdot)$ on $[0,t_f]$.  Denote by $\x_*(\cdot)$ in $\Pi(\mathcal{L})$ on $[0,t_f]$ be an admissible controlled trajectory with the boundary conditions $\x_*(0) = \bar{\x}(0)$ and $\x_*(t_f)\in\mathcal{M}\cap\mathcal{N}\backslash\{\bar{\x}(t_f)\}$. Let $(\rho_*(\cdot),\boldsymbol{\omega}_*(\cdot))\in\mathcal{U}$ and $(\rho(\cdot,\p_0),\boldsymbol{\omega}(\cdot,\p_0))\in\mathcal{U}$ on $[0,t_f]$ be the measurable control and the optimal control associated with $\x_*(\cdot)$ and $\x(\cdot,\p_0)$ on $[0,t_f]$, respectively. According to Definition \ref{DE:smooth_curve} and Lemma \ref{LE:smooth_path}, for every final point $\x_*(t_f)\in\mathcal{M}\cap\mathcal{N}\backslash\{\bar{\x}(t_f)\}$, there must exist a $\xi\in[-\varepsilon,\varepsilon]\backslash\{0\}$ and a smooth path $\p_0(\cdot)\in\mathcal{Q}$ associated with the smooth curve $\y(\cdot)\in\mathcal{M}\cap\mathcal{N}$ on $[-\varepsilon,\varepsilon]$ such that  $\y(0) = \bar{\x}(t_f) = \Pi(\gamma(t_f,\p_0(\xi)))$ and $\y(\xi) = \x_*(t_f)=\Pi(\gamma(t_f,\p_0(\xi)))$.  Since the trajectory $\x_*(\cdot)$ on $[0,t_f]$ has the same endpoints with the extremal trajectory $\x(\cdot,\p_0(\xi))=\Pi(\gamma(\cdot,\p_0(\xi)))$ on $[0,t_f]$, according to Theorem \ref{CO:cor1}, one obtains
\begin{eqnarray}
\int_0^{t_f} \rho_*(t)dt \geq \int_0^{t_f}\rho(t,\p_0(\xi))dt,
\label{EQ:rho_*_rho_xi}
\end{eqnarray}
where the equality holds if and only if $\x_*(\cdot)\equiv {\x}(\cdot,\p_0(\xi))$ on $[0,t_f]$.

Note that the four paths $(\x_0,\p_0(\cdot))$ on $[0,\xi]$, $\gamma(\cdot,\bar{\p}_0)$ on $[0,t_f]$, $(\x(\cdot,\p_0(\xi)),\p(\cdot,\p_0(\xi))) = \gamma(\cdot,\p_0(\xi))$ on $[0,t_f]$, and $(\y(\cdot),\boldsymbol{\lambda}(\cdot))$ on $[0,\xi]$ constitute a closed curve on the family $\mathcal{F}$. Since the integrand of the Poincar\'e-Cartan form $\p d\x - H dt$ is closed on $\mathcal{F}$, see Refs.~\cite{Agrachev:04,Schattler:12,Caillau:15}, one obtains
\begin{eqnarray}
&&J(\xi) + \int_0^{t_f}\big[\bar{\p}(t)\dot{\bar{\x}}(t) - H(\bar{\x}(t),\bar{\p}(t))\big]dt\nonumber\\
&=& \int_0^{t_f}\big[\p(t,\p_0(\xi))\dot{\x}(t,\p_0(\xi)) - H(\x(t,\p_0(\xi)),\p(t,\p_0(\xi)))\big]dt\nonumber\\
&+& \int_0^{\xi} \Big[\p_0(\eta)\frac{d {\x}_0}{d\eta} - H(\x_0,\p_0(\eta))\frac{d t_0}{d\eta}\Big]d\eta,
\label{EQ:compare1111}
\end{eqnarray}
where $t_0 = 0$. Since $\x_0$ is fixed, one obtains $$ \int_0^{\xi} \Big[\p_0(\eta)\frac{d {\x}_0}{d\eta} - H(\x_0,\p_0(\eta))\frac{d t_0}{d\eta}\Big]d\eta = 0$$ for every $\xi\in[-\varepsilon,\varepsilon]$.
Then, taking into account Eq.~(\ref{EQ:Hamiltonian}), a combination of Eq.~(\ref{EQ:compare1111}) with Eq.~(\ref{EQ:rho_*_rho_xi}) leads to
\begin{eqnarray}
\int_0^{t_f} \bar{\rho}(t) dt &=& \int_0^{t_f}\big[\bar{\p}(t)\dot{\bar{\x}}(t) - H(\bar{\x}(t),\bar{\p}(t))\big]dt\nonumber\\
&=& - J(\xi)  + \int_0^{t_f}\big[\p(t,\p_0(\xi))\dot{\x}(t,\p_0(\xi)) - H(\x(t,\p_0(\xi)),\p(t,\p_0(\xi)))\big]dt\nonumber\\
&=& - J(\xi) + \int_0^{t_f}\rho(t,\p_0(\xi))dt\nonumber\\
&\leq & - J(\xi) + \int_0^{t_f}\rho_*(t)dt.
\label{EQ:lemma1_compare_new}
\end{eqnarray}
Since $J(0) = 0$, Eq.~(\ref{EQ:lemma1_compare}) implies the strict inequality
\begin{eqnarray}
\int_0^{t_f} \bar{\rho}(t) dt <  \int_0^{t_f}\rho_*(t)dt,
\label{EQ:compare111}
\end{eqnarray}
holds if $\xi\neq 0$ or $\x_*(t_f)\neq \bar{\x}(t_f)$. For the case of $\x_*(t_f)=\bar{\x}(t_f)$, Eq.~(\ref{EQ:compare111}) is satisfied as well according to Theorem \ref{CO:cor1}, which proves that Eq.~(\ref{EQ:lemma1_compare}) is a sufficient condition.

Next, let us prove that Eq.~(\ref{EQ:lemma1_compare}) is a necessary condition. Assume Eq.~(\ref{EQ:lemma1_compare}) is not satisfied, i.e., there exists a smooth curve $\y(\cdot)\in\mathcal{M}\cap\mathcal{N}$ on $[-\varepsilon,\varepsilon]$ and a $\xi\in[-\varepsilon,\varepsilon]\backslash\{0\}$ such that $J(\xi) \leq J(0) = 0$. Then, according to Eq.~(\ref{EQ:lemma1_compare_new}), one obtains 
$$\int_0^{t_f}\bar{\rho}(t)dt \geq \int_0^{t_f}\rho(t,\p_0(\xi)) dt.$$
Note that the extremal trajectory $\Pi(\gamma(\cdot,\p_0(\xi)))$ in $\Pi(\mathcal{F})\subset\Pi(\mathcal{L})$ is an admissible controlled trajectory of the $L^1$-minimization problem (cf. Remark \ref{RE:admissible_control_trajectory}).  Thus, the proposition is proved.
\end{proof}
\begin{proposition}
Given the extremal $(\bar{\x}(\cdot),\bar{\p}(\cdot))=\gamma(\cdot,\bar{\p}_0)$ on $[0,t_f]$ such that each switching point is regular (cf. Assumption \ref{AS:Regular_Switching}) and {\it Conditions}  \ref{AS:Disconjugacy_bang} and \ref{AS:Transversality} are satisfied, let $l<n$. Then, if $\varepsilon>0$ is small enough,  the inequality $J^{\prime\prime}(0) \geq 0$ (resp. the strict inequality $J^{\prime\prime}(0) > 0$) for every smooth curve $\y(\cdot)\in\mathcal{M}\cap\mathcal{N}$ on $[-\varepsilon,\varepsilon]$ is a necessary condition (resp. a sufficient condition) for the strict strong-local optimality of the extremal trajectory $\bar{\x}(\cdot)$ on $[0,t_f]$.
 \label{PR:proposition_J2}
\end{proposition}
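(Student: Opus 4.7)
The strategy is to turn the pointwise inequality in Proposition~\ref{CO:J_xi_J_0} into a second-order condition by Taylor-expanding $\xi \mapsto J(\xi)$ around $\xi = 0$. Because the final time $t_f$ is fixed in the $L^1$-minimization problem, one has $dt_f/d\eta \equiv 0$, so Eq.~(\ref{EQ:J_xi}) simplifies to $J(\xi) = \int_0^{\xi} \boldsymbol{\lambda}(\eta) \y^{\prime}(\eta)\, d\eta$, whence $J^{\prime}(\xi) = \boldsymbol{\lambda}(\xi) \y^{\prime}(\xi)$.

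The first step is to verify that $J^{\prime}(0) = 0$. At $\eta = 0$ one has $\boldsymbol{\lambda}(0) = \bar{\p}(t_f)$, and the transversality condition in Eq.~(\ref{EQ:Transversality_1}) gives $\bar{\p}(t_f) = \boldsymbol{\nu}\, d\phi(\bar{\x}(t_f))$. Since the curve $\y(\cdot)$ lies in $\mathcal{M}$, the identity $\phi(\y(\eta)) \equiv 0$ differentiates at $\eta = 0$ to $d\phi(\bar{\x}(t_f)) \y^{\prime}(0) = 0$. Composing these two facts yields $J^{\prime}(0) = \boldsymbol{\nu}\, d\phi(\bar{\x}(t_f)) \y^{\prime}(0) = 0$. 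The Taylor expansion of $J$ around $0$ therefore reduces to $J(\xi) - J(0) = \tfrac{1}{2} J^{\prime\prime}(0) \xi^2 + o(\xi^2)$.

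The two parts of the statement then follow from this expansion together with Proposition~\ref{CO:J_xi_J_0}. For the sufficient direction, if $J^{\prime\prime}(0) > 0$ along every smooth curve $\y(\cdot)\subset \mathcal{M}\cap\mathcal{N}$, then $\xi = 0$ is a strict local minimum of $J$ along that curve and hence $J(\xi) > J(0)$ on a punctured neighbourhood of $0$; applying Proposition~\ref{CO:J_xi_J_0} delivers the strict strong-local optimality of $\bar{\x}(\cdot)$. For the necessary direction, strict strong-local optimality combined with Proposition~\ref{CO:J_xi_J_0} implies that $\xi = 0$ is a local minimum of $J$ along every admissible curve; the standard second-order necessary condition from elementary calculus (combined with $J^{\prime}(0) = 0$) then forces $J^{\prime\prime}(0) \geq 0$.

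The main obstacle, and the reason for requiring $\varepsilon$ small enough, will be to secure uniformity of the Taylor remainder over the family of admissible curves. A further differentiation of $\phi(\y(\eta)) \equiv 0$ gives $\bar{\p}(t_f)\y^{\prime\prime}(0) = -\boldsymbol{\nu}\, d^2\phi(\bar{\x}(t_f))(\y^{\prime}(0),\y^{\prime}(0))$, so that $J^{\prime\prime}(0)$ depends on the curve only through its tangent vector $\y^{\prime}(0) \in T_{\bar{\x}(t_f)}\mathcal{M}$; it is therefore a quadratic form on a finite-dimensional subspace. Strict positivity of this form on the compact unit sphere of $T_{\bar{\x}(t_f)}\mathcal{M}$ supplies a uniform lower bound and hence a single $\varepsilon > 0$ valid for every admissible curve. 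This finite-dimensional reduction is also what links the condition to the Jacobi field propagation used in Sect.~\ref{SE:Procedure}, since $\boldsymbol{\lambda}^{\prime}(0)$ is read off from the derivative $\p_0^{\prime}(0)$ of the Lagrange multiplier along the path supplied by Lemma~\ref{LE:smooth_path}.
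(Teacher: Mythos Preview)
Your argument is correct and follows essentially the same route as the paper: reduce $J(\xi)$ using $dt_f/d\eta=0$, compute $J'(\xi)=\boldsymbol{\lambda}(\xi)\y'(\xi)$, kill $J'(0)$ via the transversality condition and the tangency of $\y'(0)$ to $\mathcal{M}$, and then invoke Proposition~\ref{CO:J_xi_J_0}. Your final paragraph on uniformity and the reduction of $J''(0)$ to a quadratic form in $\y'(0)\in T_{\bar{\x}(t_f)}\mathcal{M}$ goes beyond what the paper does in this particular proof (the paper defers that computation to the next proposition), but it is correct and in fact clarifies why a single $\varepsilon$ suffices.
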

\begin{proof}
Since the final time $t_f$ is fixed, Eq.~(\ref{EQ:J_xi}) is reduced as
\begin{eqnarray}
J(\xi) = \int_{0}^{\xi}\boldsymbol{\lambda}(\eta){\y^{\prime}}(\eta)d\eta.\nonumber
\end{eqnarray}
Taking derivative of $J(\xi)$ with respect to $\xi$ yields
\begin{eqnarray}
J^{\prime}(\xi) = \boldsymbol{\lambda}(\xi)\cdot \y^{\prime}(\xi).
\label{EQ:J_prime}
\end{eqnarray}
Note that ${\boldsymbol{\lambda}}(0) = \bar{\p}(t_f)$. Taking into account Eq.~(\ref{EQ:Transversality_1}), for every smooth curve $\y(\cdot)\in\mathcal{M}\cap\mathcal{N}$ on $[-\varepsilon,\varepsilon]$, we have ${ J^{\prime}}(0) = {\boldsymbol{\lambda}}(0) \y^{\prime}(0) = 0$ since  ${\y^{\prime}}(0)$ is a tangent vector of the submanifold $\mathcal{M}$ at $\bar{\x}(t_f)$. Then, according to Proposition \ref{CO:J_xi_J_0}, this proposition is proved. 
\end{proof}

\begin{definition}
Given the extremal $(\bar{\x}(\cdot),\bar{\p}(\cdot)) = \gamma(\cdot,\bar{\p}_0)$ on $[0,t_f]$, denote by $\bar{\boldsymbol{\nu}}\in(\mathbb{R}^l)^*$ the vector of the Lagrangian multipliers of this extremal such that
$$\bar{\p}(t_f) = \bar{\boldsymbol{\nu}}{d\phi(\bar{\x}(t_f))}.$$
\label{DE:Lagrangian_Multiplier}
\end{definition}

\begin{proposition}
In the case of $l< n$, given the extremal $(\bar{\x}(\cdot),\bar{\p}(\cdot)) =\gamma(\cdot,\bar{\p}_0)$ on $[0,t_f]$ such that each switching point is regular (cf. Assumption \ref{AS:Regular_Switching}), assume {\it Conditions} \ref{AS:Disconjugacy_bang} and \ref{AS:Transversality} are satisfied. Then,  the inequality ${ J^{\prime\prime}}(0) \geq 0$ (resp. strict inequality ${ J^{\prime\prime}}(0) > 0$) is satisfied for every smooth curve $\y(\cdot)\in\mathcal{M}\cap\mathcal{N}$ on $[-\varepsilon,\varepsilon]$ if and only if there holds
 \begin{eqnarray}
\boldsymbol{\zeta}^T\left\{\frac{\partial \p^T(t_f,\bar{\p}_0)}{\partial \p_0} \left[ \frac{\partial \x(t_f,\bar{\p}_0)}{\partial \p_0}\right]^{-1} - \bar{\boldsymbol{\nu}}d^2\phi(\bar{\x}(t_f))\right\}\boldsymbol{\zeta} \geq 0\ \text{(resp.}\ >0\text{)},\nonumber
\end{eqnarray}
for every tangent vector $\boldsymbol{\zeta}\in T_{\bar{\x}(t_f)}\mathcal{M}\backslash\{0\}$.
\label{LE:lemma2}
\end{proposition}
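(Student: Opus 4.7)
The plan is to compute $J''(0)$ directly, match it to the quadratic form appearing in the statement, and then close the argument with Proposition~\ref{PR:proposition_J2}. Since $t_f$ is fixed, Eq.~(\ref{EQ:J_xi}) reduces to $J(\xi)=\int_{0}^{\xi}\boldsymbol{\lambda}(\eta)\y'(\eta)\,d\eta$, and differentiating twice yields
\[
J''(0)=\boldsymbol{\lambda}'(0)\,\y'(0)+\bar{\p}(t_f)\,\y''(0).
\]
I would then treat each summand separately, the first using Jacobi fields and the second using the terminal constraint.

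For the first summand I would parametrize $(\y(\eta),\boldsymbol{\lambda}(\eta))=\gamma(t_f,\p_0(\eta))$ via the path supplied by Lemma~\ref{LE:smooth_path}, set $\boldsymbol{\zeta}:=\y'(0)\in T_{\bar{\x}(t_f)}\mathcal{M}$, and abbreviate the Jacobi matrices $A:=\frac{\partial\x(t_f,\bar{\p}_0)}{\partial\p_0}$ and $B:=\frac{\partial\p^{T}(t_f,\bar{\p}_0)}{\partial\p_0}$. The chain rule gives $A\,\p_0'(0)^{T}=\boldsymbol{\zeta}$ and $\boldsymbol{\lambda}'(0)^{T}=B\,\p_0'(0)^{T}=BA^{-1}\boldsymbol{\zeta}$, hence $\boldsymbol{\lambda}'(0)\,\y'(0)=\boldsymbol{\zeta}^{T}A^{-T}B^{T}\boldsymbol{\zeta}$. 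To rewrite this as $\boldsymbol{\zeta}^{T}BA^{-1}\boldsymbol{\zeta}$ I would appeal to the Lagrangian character of the Hamiltonian flow: because $A(0)=0$ and $B(0)=I$ the quantity $B^{T}A-A^{T}B$ vanishes at $t=0$, and it is preserved along the flow (it is exactly the symplectic pairing of two Jacobi variations), so $A^{T}B$ is symmetric at $t_f$ and therefore $A^{-T}B^{T}=BA^{-1}$.

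For the second summand, $\y(\eta)\in\mathcal{M}$ forces $\phi(\y(\eta))\equiv 0$; differentiating twice at $\eta=0$ yields $d\phi(\bar{\x}(t_f))\,\y''(0)=-d^{2}\phi(\bar{\x}(t_f))(\boldsymbol{\zeta},\boldsymbol{\zeta})$. Substituting the transversality relation $\bar{\p}(t_f)=\bar{\boldsymbol{\nu}}\,d\phi(\bar{\x}(t_f))$ from Definition~\ref{DE:Lagrangian_Multiplier} converts $\bar{\p}(t_f)\,\y''(0)$ into $-\boldsymbol{\zeta}^{T}\bar{\boldsymbol{\nu}}\,d^{2}\phi(\bar{\x}(t_f))\boldsymbol{\zeta}$. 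Assembling the two pieces gives
\[
J''(0)=\boldsymbol{\zeta}^{T}\!\left\{BA^{-1}-\bar{\boldsymbol{\nu}}\,d^{2}\phi(\bar{\x}(t_f))\right\}\!\boldsymbol{\zeta},
\]
which is precisely the quadratic form in the proposition.

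To conclude, I would verify that the correspondence $\y(\cdot)\mapsto\boldsymbol{\zeta}=\y'(0)$ is surjective onto $T_{\bar{\x}(t_f)}\mathcal{M}$: any such tangent vector is realized by a twice continuously differentiable curve on $\mathcal{M}\cap\mathcal{N}$, since $\mathcal{N}$ is an open neighborhood of $\bar{\x}(t_f)$ under Condition~\ref{AS:Disconjugacy_bang}. Quantifying over curves and over tangent vectors therefore yields equivalent sign conditions, so Proposition~\ref{PR:proposition_J2} delivers the stated matrix inequality. The main delicate step I anticipate is the Lagrangian/symplectic argument that forces $BA^{-1}$ to be symmetric; without it the quadratic form would not even be symmetric in $\boldsymbol{\zeta}$ and the identification with the proposition's expression would fail. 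The remaining work is routine chain-rule differentiation combined with the PMP transversality condition already at hand.
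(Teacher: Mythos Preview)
Your argument is correct and follows the same route as the paper: compute $J''(0)=\boldsymbol{\lambda}'(0)\y'(0)+\bar{\p}(t_f)\y''(0)$, eliminate $\y''(0)$ via the second derivative of $\phi(\y(\eta))\equiv 0$ together with $\bar{\p}(t_f)=\bar{\boldsymbol{\nu}}\,d\phi(\bar{\x}(t_f))$, and express $\boldsymbol{\lambda}'(0)$ through the Jacobi matrices using the chain rule and the invertibility of $A$.

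One remark worth making: the step you flag as ``the main delicate step'' --- the Lagrangian/symplectic argument that $A^{T}B$ is symmetric --- is unnecessary for this proposition. Since $\boldsymbol{\lambda}'(0)\y'(0)$ is a scalar, it equals its own transpose, so $\boldsymbol{\zeta}^{T}A^{-T}B^{T}\boldsymbol{\zeta}=\boldsymbol{\zeta}^{T}BA^{-1}\boldsymbol{\zeta}$ immediately; this is exactly what the paper does, without invoking any symplectic structure. (Your symplectic claim is true, but proving it here would also require checking that $B^{T}A-A^{T}B$ is preserved across the switching-time jumps of the Jacobi matrices, an extra wrinkle you do not need.) Finally, the closing appeal to Proposition~\ref{PR:proposition_J2} is misplaced: that proposition links $J''(0)$ to optimality, whereas the present statement is purely the algebraic equivalence between the sign of $J''(0)$ over all curves and the sign of the quadratic form over all tangent vectors, which your computation plus surjectivity already establishes.
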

\begin{proof}
Differentiating $J^{\prime}(\xi)$ in Eq.~(\ref{EQ:J_prime}) with respect to $\xi$ yields
\begin{eqnarray}
{J^{\prime\prime}}(\xi) &=& {\boldsymbol{\lambda}^{\prime}(\xi)}{\y^{\prime}(\xi)} + \boldsymbol{\lambda}(\xi){\y^{\prime\prime}(\xi)}.
\label{EQ:d2Jdxi2}
\end{eqnarray}
Then, differentiating $\phi(\y(\xi))$ with respect to $\xi$ yields
\begin{eqnarray}
\frac{d  }{d\xi}\phi(\y(\xi))  &=&{{ d\phi(\y(\xi))}} {\y^{\prime}(\xi)}= 0,\label{EQ:dphidxi}\nonumber\\
\frac{d^2  }{d\xi^2}\phi(\y(\xi))  &=& [d^2\phi(\y(\xi))\y^{\prime}(\xi)]\y^{\prime}(\xi)
 + {d \phi(\y(\xi))}{\y^{\prime\prime}(\xi)}= 0.
\label{EQ:dphi2dxi2}
\end{eqnarray}
Since $(\bar{\x}(t_f),\bar{\p}(t_f)) = (\y(0),\boldsymbol{\lambda}(0))$, according to the definition of the vector $\bar{\boldsymbol{\nu}}$ in Definition \ref{DE:Lagrangian_Multiplier}, one immediately has $\boldsymbol{\lambda}(0) = \bar{\boldsymbol{\nu}}{d\phi(\y(0))}$.
Thus,  multiplying $\bar{\boldsymbol{\nu}}$ on both sides of Eq.~(\ref{EQ:dphi2dxi2}) and fixing $\xi = 0$, we obtain
\begin{eqnarray}
\bar{\boldsymbol{\nu}}\frac{d^2  \phi(\y(0))}{d\xi^2}  &=& \boldsymbol{\lambda}(0){\y^{\prime\prime}(0)}  + \bar{\boldsymbol{\nu}}
\left[d^2\phi(\y(0))\y^{\prime}(0)\right] \y^{\prime}(0)
\nonumber\\
&=&  {\boldsymbol{\lambda}(0){\y^{\prime\prime}(0)}  +
\left[\y^{\prime}(0)\right]^{T}\left[ \bar{\boldsymbol{\nu}}d^2\phi(\y(0))\right]\y^{\prime}(0) }
\nonumber\\
&=& 0.\nonumber
\end{eqnarray}
Substituting this equation into Eq.~(\ref{EQ:d2Jdxi2}) yields
\begin{eqnarray}
{ J^{\prime\prime}}(0) = \boldsymbol{\lambda}^{\prime}(0){\y^{\prime}(0)}  -\left[\y^{\prime}(0)\right]^{T}\left[\bar{\boldsymbol{\nu}}d^2\phi(\y(0))\right]\y^{\prime}(0).
\label{EQ:d2Jdxi20}
\end{eqnarray}
Note that we have
\begin{eqnarray}
{\y^{\prime}(\xi)} &=& \frac{d\x(t_f,\p_0(\xi))}{d\xi} = \frac{\partial \x(t_f,\p_0(\xi))}{\partial \p_0}\left[{\p_0^{\prime}(\xi)}\right]^T,\nonumber\\
\left[{\boldsymbol{\lambda}^{\prime}(\xi)} \right]^T&=&\frac{d\p^T(t_f,\p_0(\xi))}{d\xi}= \frac{\partial \p^T(t_f,\p_0(\xi))}{\partial \p_0} \left[{\p_0^{\prime}(\xi)}\right]^T.
\label{EQ:dlambdadxi}
\end{eqnarray}
 Since the matrix $ \frac{\partial \x(t_f,\p_0(\xi))}{\partial \p_0}$  is nonsingular if {\it Condition} \ref{AS:Disconjugacy_bang} is satisfied, we have $$\left[{\p_0^{\prime}(\xi)} \right]^T= \left[ \frac{\partial \x(t_f,\p_0(\xi))}{\partial \p_0}\right]^{-1}{\y^{\prime}(\xi)}.$$
Substituting this equation into Eq.~(\ref{EQ:dlambdadxi})
yields
$$\left[{\boldsymbol{\lambda}^{\prime}(\xi)}\right]^T = \frac{\partial \p^T(t_f,\p_0(\xi))}{\partial \p_0} \left[ \frac{\partial \x(t_f,\p_0(\xi))}{\partial \p_0}\right]^{-1}{\y^{\prime}(\xi)}.$$
Again, substituting this equation into Eq.~(\ref{EQ:d2Jdxi20}) and taking into account $\bar{\p}_0=\p_0(0)$ and $\bar{\x}(t_f) = \y(0)$, we eventually get that for every smooth curve $\y(\cdot)\in\mathcal{M}\cap\mathcal{N}$ on $[-\varepsilon,\varepsilon]$ there holds
\begin{eqnarray}
J^{\prime\prime} (0) = \left[\y^{\prime}(0)\right]^T\Big\{\frac{\partial \p^T(t_f,\bar{\p}_0)}{\partial \p_0} \left[ \frac{\partial \x(t_f,\bar{\p}_0)}{\partial \p_0}\right]^{-1} - \bar{\boldsymbol{\nu}}d^2\phi(\bar{\x}(t_f)) \Big\}\y^{\prime}(0).\nonumber
\end{eqnarray}
Note that the vector $\y^{\prime}(0)$ can be an arbitrary vector in the tangent space $T_{\bar{\x}(t_f)}\mathcal{X}\backslash\{0\}$, one proves this proposition. 
\end{proof}

\begin{condition}
Given the extremal $(\bar{\x}(\cdot),\bar{\p}(\cdot)) = \gamma(\cdot,\bar{\p}_0)$ on $[0,t_f]$,  let 
\begin{eqnarray}
\boldsymbol{\zeta}^T\left\{\frac{\partial \p^T(t_f,\bar{\p}_0)}{\partial \p_0} \left[ \frac{\partial \x(t_f,\bar{\p}_0)}{\partial \p_0}\right]^{-1} - \bar{\boldsymbol{\nu}}d^2\phi(\bar{\x}(t_f))\right\}\boldsymbol{\zeta} > 0,\nonumber
\end{eqnarray}
be satisfied for every vector $\boldsymbol{\zeta}\in T_{\bar{\x}(t_f)}\mathcal{M}\backslash\{0\}$.
\label{AS:terminal_condition}
\end{condition}
\noindent Then, as a combination {\it Propositions} \ref{PR:proposition_J2} and \ref{LE:lemma2}, we eventually obtain the following result.
\begin{theorem}
Given the extremal $(\bar{\x}(\cdot),\bar{\p}(\cdot)) = \gamma(\cdot,\bar{\p}_0)$ on $[0,t_f]$ such that every switching point is regular (cf. Assumption \ref{AS:Regular_Switching}), let $l<n$. Then, if {\it Conditions} \ref{AS:Disconjugacy_bang}, \ref{AS:Transversality}, and \ref{AS:terminal_condition} are satisfied, the extremal trajectory $\bar{\x}(\cdot)$ on $[0,t_f]$ realizes a strict strong-local optimality (cf. Definition \ref{DE:optimality}).
\label{TH:optimality}
\end{theorem}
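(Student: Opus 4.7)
The plan is to derive Theorem 1 by directly chaining Propositions 2 and 3 under the hypotheses stated. The overall logical skeleton I have in mind is: Condition 3 gives strict positivity of a certain quadratic form on tangent vectors of $\mathcal{M}$ at $\bar{\x}(t_f)$; via Proposition 3 this translates into $J^{\prime\prime}(0) > 0$ for every smooth curve on $\mathcal{M}\cap\mathcal{N}$; via Proposition 2 this upgrades to strict strong-local optimality.

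First I would invoke Proposition 3 in its ``if'' direction. The matrix inequality postulated in Condition 3 is, by construction, exactly the statement that
$$\boldsymbol{\zeta}^T\left\{\frac{\partial \p^T(t_f,\bar{\p}_0)}{\partial \p_0}\left[\frac{\partial \x(t_f,\bar{\p}_0)}{\partial \p_0}\right]^{-1} - \bar{\boldsymbol{\nu}}\, d^2\phi(\bar{\x}(t_f))\right\}\boldsymbol{\zeta} > 0$$
for every nonzero $\boldsymbol{\zeta}\in T_{\bar{\x}(t_f)}\mathcal{M}$, so Proposition 3 delivers $J^{\prime\prime}(0) > 0$ for every nonconstant smooth curve $\y(\cdot)\in\mathcal{M}\cap\mathcal{N}$ on $[-\varepsilon,\varepsilon]$ with $\y(0)=\bar{\x}(t_f)$. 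At this point I would briefly remark that every tangent vector $\boldsymbol{\zeta}\in T_{\bar{\x}(t_f)}\mathcal{M}$ is actually realizable as $\y^{\prime}(0)$ for some such admissible curve: since Condition 1 at the endpoint $t_f$ forces $\delta(t_f)\neq 0$, the subset $\mathcal{N}$ is an open neighborhood of $\bar{\x}(t_f)$ in $\mathcal{X}$, so $\mathcal{M}\cap\mathcal{N}$ is relatively open in the submanifold $\mathcal{M}$ and therefore carries smooth curves with arbitrary prescribed tangent vector at $\bar{\x}(t_f)$.

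Next I would feed the conclusion $J^{\prime\prime}(0) > 0$ into Proposition 2, whose sufficient-condition direction asserts that under Assumption 1 and Conditions 1--2 this strict inequality, holding for every admissible smooth curve on $\mathcal{M}\cap\mathcal{N}$, implies that $\bar{\x}(\cdot)$ is a strict strong-local optimizer in the $C^0$-topology sense of Definition 1. Since all three of its prerequisites (Assumption 1, Condition 1, Condition 2) are among the hypotheses of Theorem 1, this chaining is immediate and the theorem is proved.

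There is essentially no new analytical obstacle at this stage. The heavy machinery --- the parameterized family $\mathcal{F}$, the perturbed Lagrangian neighborhood $\mathcal{L}$ and the tubular diffeomorphism of its projection (Remark \ref{RE:neighborhood}), the closedness of the Poincaré--Cartan form on $\mathcal{F}$, the vanishing of $J^{\prime}(0)$ via the transversality condition, and the substitution of the Jacobi-field matrices $\partial \x/\partial \p_0$ and $\partial \p/\partial \p_0$ that turns $J^{\prime\prime}(0)$ into the quadratic form of Condition 3 --- has all been absorbed into Propositions 1--3. The only bookkeeping point I want to double-check is that the successive shrinkings of $\mathcal{P}$ and $\varepsilon$ demanded by these three propositions can be performed consistently; but as each proposition only requires a sufficiently small neighborhood and the intersection of finitely many such neighborhoods remains a neighborhood, a single pair $(\mathcal{P},\varepsilon)$ works for all invocations simultaneously.
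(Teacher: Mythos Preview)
Your proposal is correct and matches the paper's own approach exactly: the paper does not give a standalone proof of this theorem but simply states that it follows ``as a combination of Propositions \ref{PR:proposition_J2} and \ref{LE:lemma2},'' which is precisely the chaining you describe. Your additional remarks on the realizability of tangent vectors and the consistency of shrinking $\mathcal{P}$ and $\varepsilon$ are sound bookkeeping points that the paper leaves implicit.
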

Consequently, in the case of $l<n$, {\it Conditions} \ref{AS:Disconjugacy_bang}, \ref{AS:Transversality}, and \ref{AS:terminal_condition} are sufficient to guarantee a bang-bang extremal with regular switching points to be a strict strong-local optimum. In next section, the numerical implementation for these three conditions will be derived.

\section{Numerical implementation for sufficient optimality conditions}\label{SE:Procedure}

Once the extremal $(\bar{\x}(\cdot),\bar{\p}(\cdot))=\gamma(\cdot,\bar{\p}_0)$ on $[0,t_f]$ is computed,  according to Definition \ref{DE:Lagrangian_Multiplier},  the vector $\bar{\boldsymbol{\nu}}$ of Lagrangian multipliers in Condition \ref{AS:terminal_condition} can be computed by
\begin{eqnarray}
\bar{\boldsymbol{\nu}} = \bar{\p}(t_f){d\phi^T(\bar{\x}(t_f))} \left[{d\phi(\bar{\x}(t_f))} {d\phi^T(\bar{\x}(t_f))}\right]^{-1}.\label{EQ:numerical_nu}
\end{eqnarray}
\begin{definition}
We define by $\boldsymbol{C}\in\mathbb{R}^{n\times (n-l)}$ a full-rank matrix such that its columns constitute a basis of the tangent space $T_{\bar{\x}(t_f)}\mathcal{M}$.\label{DE:definition_C}
\end{definition}
\noindent Then, one immediately gets that Condition \ref{AS:terminal_condition} is satisfied if and only if there holds
\begin{eqnarray}
\boldsymbol{C}^T\left\{\frac{\partial \p^T(t_f,\bar{\p}_0)}{\partial \p_0} \left[ \frac{\partial \x(t_f,\bar{\p}_0)}{\partial \p_0}\right]^{-1} - \bar{\boldsymbol{\nu}}d^2\phi(\bar{\x}(t_f))\right\}\boldsymbol{C} \succ 0.
\label{EQ:positive_definite}
\end{eqnarray}
Note that  the matrix $\boldsymbol{C}$ can be computed by a simple Gram--Schmidt process once one derives the explicit expression of the matrix ${d\phi(\bar{\x}(t_f))}$. Thus, it suffices to compute the matrix $\frac{\partial \x}{\partial \p_0}(\cdot,\bar{\p}_0)$ on $[0,t_f]$ and the matrix $\frac{\partial \p^T}{\partial \p_0}(\cdot,\bar{\p}_0)$ at $t_f$ in order to test {\it Conditions}  \ref{AS:Disconjugacy_bang}, \ref{AS:Transversality}, and \ref{AS:terminal_condition}.


It follows from the classical results about solutions to ODEs that the extremal trajectory $(\x(t,{\p}_0),\p(t,\p_0))$ and its time derivative are continuously differentiable with respect to $\p_0$ on $[0,t_f]$. Thus, taking derivative of Eq.~(\ref{EQ:cannonical}) with respect to $\p_0$ on each segment $(t_i,t_{i+1})$, we obtain
\begin{eqnarray}
\left[\begin{array}{c}
\frac{d}{dt}\frac{\partial \x}{\partial\p_0}(t,\bar{\p}_0) \\
\frac{d}{dt}\frac{\partial \p^T}{\partial\p_0}(t,\bar{\p}_0) 
\end{array}
\right]= 
\left[\begin{array}{cc}
H_{\p\x}(\bar{\x}(t),\bar{\p}(t)) & H_{\p\p}(\bar{\x}(t),\bar{\p}(t))\\
-H_{\x\x}(\bar{\x}(t),\bar{\p}(t)) &- H_{\x\p}(\bar{\x}(t),\bar{\p}(t))
\end{array}\right]
\left[
\begin{array}{c}
\frac{\partial \x}{\partial \p_0}(t,\bar{\p}_0)\\
\frac{\partial \p^T}{\partial \p_0}(t,\bar{\p}_0)
\end{array}
\right].
\label{EQ:Homogeneous_matrix}
\end{eqnarray}
Since the initial point $\x_0$ is fixed, one can  obtain the initial conditions  as
\begin{eqnarray}
\frac{\partial \x}{\partial \p_0}(0,\bar{\p}_0) = \boldsymbol{0}_n, \ \frac{\partial \p^T}{\partial \p_0}(0,\bar{\p}_0) = I_n,
\label{EQ:initial_condition}
\end{eqnarray}
where $\boldsymbol{0}_n$ and $I_n$ denote the zero and identity matrix of $\mathbb{R}^{n\times n}$.
Note that the two matrices $\frac{\partial \x}{\partial \p_0}(\cdot,\bar{\p}_0)$ and $\frac{\partial \boldsymbol{p}^T}{\partial \p_0}(\cdot,\bar{\p}_0)$ are discontinuous at the each switching time $t_i$. Comparing with the development in Refs. \cite{Schattler:12,Noble:02,Caillau:15}, the updating formulas for the two matrices  $\frac{\partial \x}{\partial \p_0}(\cdot,\bar{\p}_0)$ and $\frac{\partial \boldsymbol{p}^T}{\partial \p_0}(\cdot,\bar{\p}_0)$ at each switching time $t_i$ can be written as 
\begin{eqnarray}
\frac{\partial \x}{\partial \p_0}(t_i+,\bar{\p}_0) &=& \frac{\partial \x}{\partial \p_0}(t_i-,\bar{\p}_0)  - \Delta \rho_i \f_1(\x(t_i),\boldsymbol{\omega}(t_i)){d t_i(\bar{\p}_0)},\label{EQ:update_formula_x}\\
\frac{\partial \p^T}{\partial \p_0}(t_i+,\bar{\p}_0) &=& \frac{\partial \p^T}{\partial \p_0}(t_i-,\bar{\p}_0) + \Delta \rho_i \frac{\partial \f_1}{\partial \x}(\x(t_i),\boldsymbol{\omega}(t_i)\p^T(t_i){d t_i(\bar{\p}_0)},\label{EQ:update_formula_p}
\end{eqnarray}
where $\Delta \rho_i = \rho(t_i+) - \rho(t_i -)$.  Up to now, except for ${d t_i(\bar{\p}_0)}$, all necessary quantities can be computed. Note that for every $\p_0\in\mathcal{P}$ there holds
\begin{eqnarray}
H_1(\x(t_i(\p_0),\p_0),\p(t_i(\p_0),\p_0)) = 0.
\label{EQ:H_1(t_i)}
\end{eqnarray}
Taking into account $\dot{H}_1(\x(t),\p(t)) = H_{01}(\x(t),\p(t))$, see Eq.~(\ref{EQ:H01}), and differentiating Eq.~(\ref{EQ:H_1(t_i)}) with respect to $\p_0$ yields
\begin{eqnarray}
0 &=& {H}_{01}(\x(t_i,\p_0),\p(t_i,\p_0))dt_i({\p_0}) + \p(t_i,\p_0)\frac{\partial \f_1}{\partial \x}(\x(t_i,\p_0),\boldsymbol{\omega}(t_i,\p_0))\frac{\partial \x(t_i,\p_0)}{\partial \p_0}\nonumber\\
& +& \f_1^T(\x(t_i,\p_0),\boldsymbol{\omega}(t_i,\p_0))\frac{\partial \p^T(t_i,\p_0)}{\partial \p_0}.\nonumber
\end{eqnarray}
According to Assumption \ref{AS:Regular_Switching}, there holds $H_{01}(\bar{\x}(t_i),\bar{\p}(t_i)) \neq 0$ for $i=1,2,\cdots,k$.  Thus, we obtain
\begin{eqnarray}
{d t_i}{( \bar{\p}_0)}  &=& -\Big[\p(t_i,\bar{\p}_0)\frac{\partial \f_1}{\partial \x}(\x(t_i,\p_0),\boldsymbol{\omega}(t_i,\p_0))\frac{\partial \x(t_i,\bar{\p}_0)}{\partial \p_0} \nonumber\\
&+& \f_1^T(\x(t_i,\bar{\p}_0),\boldsymbol{\omega}(t_i,\bar{\p}_0))\frac{\partial \p^T(t_i,\bar{\p}_0)}{\partial \p_0}\Big]/{H}_{01}(\bar{\x}(t_i),\bar{\p}(t_i)).\nonumber
\end{eqnarray} 
Therefore, in order to compute the two matrices $\frac{\partial \x}{\partial \p_0}(\cdot,\bar{\p}_0)$ and $\frac{\partial \boldsymbol{p}^T}{\partial \p_0}(\cdot,\bar{\p}_0)$ on $[0,t_f]$, it is sufficient to choose  the initial condition in Eq.~(\ref{EQ:initial_condition}), then to numerically integrate  Eq.~(\ref{EQ:Homogeneous_matrix}) and to use the updating formulas in Eq.~(\ref{EQ:update_formula_x}) and Eq.~(\ref{EQ:update_formula_p}) once a switching point is encountered. 

According to the approach of Chen {\it et al.} in Ref.~\cite{Caillau:15}, 
given every bang-bang extremal $\Pi(\gamma(\cdot,\bar{\p}_0))$ on $[0,t_f]$, $\delta(\cdot)$ is a constant on every zero-thrust arc. Hence, to test focal points (or conjugate points for $l=n$), it suffices to test the zero of $\delta(\cdot)$ on each maximum-thrust arc and to test the non-positivity of $\delta(t_i-)\delta(t_i+) $ at each switching time $t_i$.

\section{Orbital Transfer Computation}\label{SE:Numerical}

In this numerical section, we consider the three-body problem of the Earth, the Moon, and an artificial spacecraft. Since the orbits of the Earth and the Moon around their common centre of mass are nearly circular, i.e., the eccentricity is around $5.49\times 10^{-2}$, and the mass of an artificial spacecraft is negligible compared with that of the Earth and the Moon,   the Earth-Moon-Spacecraft (EMS) system can be approximately considered as a CRTBP, see Ref.~\cite{Szebehely:67}. Then, we have the below physical parameters corresponding to the EMS, $\mu = 1.2153\times 10^{-2}$, $d_* = 384,400.00$ km, $t_* = 3.7521\times 10^{5}$ seconds, and $m_* = 6.045\times 10^{24}$ kg. The initial mass of the spacecraft is specified as $500$ kg, the maximum thrust of the engine equipped on the spacecraft is taken as $1.0$ N, i.e., $$\tau_{max} =1.0 \frac{ t_*^{2}}{m_*d_*},$$ such that the initial maximum acceleration is $2.0\times 10^{-3}$ m$^2$/s. The spacecraft initially moves on a circular Earth geosynchronous orbit lying on the $XY$-plane such that the radius of the initial orbit is $r_g = 42,165.00$ km. When the spacecraft moves to the point on $X$-axis between the Earth and the Moon, i.e., $\parallel \r(0) \parallel = r_g/d_* - \mu$, we start to control the spacecraft to fly to a circular orbit around the Moon with radius $r_m = 13,069.60$ km such that the $L^1$-norm of control is minimized at the fixed final time $t_f = 38.46$ days. Accordingly, the initial state $\x_0 = (\r_0,\v_0,m_0)$ is given as
$$\r_0 = (r_g/d_*-\mu,0,0)^T,\ \v_0=(0,v_g,0)^T,\ \text{and}\ m_0 = 500/m_*,$$
where $v_g$ is the non-dimensional velocity of the spacecraft on the initial orbit, and the explicit expression of the function $\phi$ in Eq.~(\ref{EQ:final_manifold}) can be written as
\begin{eqnarray}
\phi(\x_f) = \left[
\begin{array}{c}
  \frac{1}{2}\parallel \r(t_f) - [1-\mu,0,0]^T\parallel^2 - \frac{1}{2}(r_m/d_*)^2 \\
 \frac{1}{2}\parallel \v(t_f) \parallel^2 - \frac{1}{2}v_m^2 \\
\v^T(t_f)\cdot(\r(t_f) - [1-\mu,0,0]^T) \\
\r^T(t_f)\cdot \1_{Z} \\
\v^T(t_f)\cdot \1_Z
\end{array}
\right],
\label{EQ:function_phi}
\end{eqnarray}
where $\1_z=[0,0,1]^T$ denotes the unit vector of the $Z$-axis of the rotating frame $OXYZ$ and $v_m$ is the  non-dimensional velocity of the spacecraft on the circular orbit around the Moon with radius $r_m$.


We consider the constant mass model in which $\beta = 0$ since this constant mass model can capture the main features of the original problem, see Refs.~\cite{Caillau:15,Caillau:12,Caillau:12time}. In this case, the mass $m$ is a constant parameter instead of a state in the system $\Sigma$, it follows that $\x = (\r,\v)$ and $\p = (\p_r,\p_v)$. Firstly, we compute the extremal $(\bar{\x}(\cdot),\bar{\p}(\cdot))$ on $[0,t_f]$. It suffices to solve a shooting function corresponding to a two-point boundary value problem~\cite{Pan:13}. A simple shooting method is not stable to solve this problem because one usually does not know a priori the structure of the optimal control, and the numerical computations of the shooting function and its differential may be intricate since the shooting function  is not continuous differentiable. We use a regularization procedure \cite{Caillau:12} by smoothing the control corner to get an energy-optimal trajectory firstly, then use a homotopy method to solve the real trajectory with a bang-bang control. Note that both the initial point $\x_0$ and the final constraint submanifold $\mathcal{M}$ lie on the $XY$-plane, it follows that the whole trajectory lies on the  $XY$-plane as well. Fig. \ref{Fig:Transferring_Orbit3_1} illustrates the non-dimensional profile of the position vector $ \r $ along the computed extremal trajectory.
\begin{figure}[!ht]
 \centering\includegraphics[width=\textwidth, angle=0]{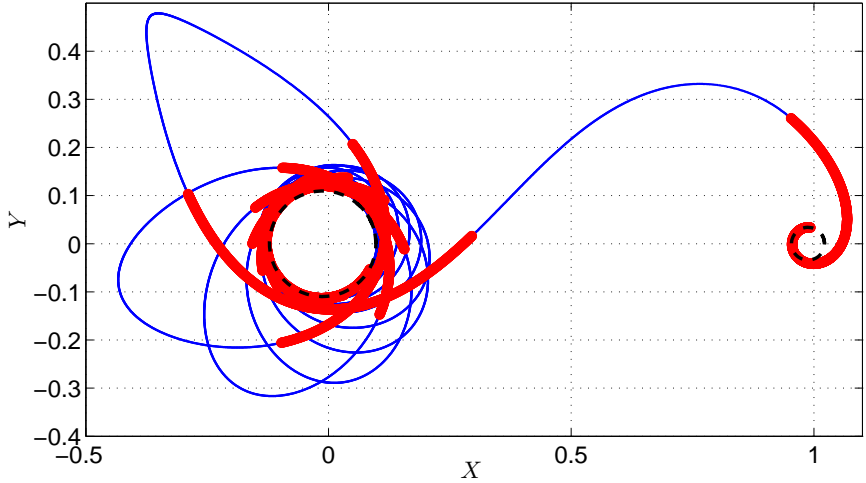}
 \caption[]{The non-dimensional profile of the position vector $ \r $ of the $L^1$-minization trajectory in the rotating frame $OXYZ$ of the EMS system. The thick curves are the maximum-thrust arcs and the thin curves are the zero-thrust arcs. The bigger dashed circle and the smaller one are the initial and final circular orbits around the Earth and the Moon, respectively.}
 \label{Fig:Transferring_Orbit3_1}
\end{figure}
The profiles of $\rho$, $\parallel \p_v \parallel$, and $H_1$ with respect to non-dimensional time are shown in Fig. \ref{Fig:Transferring_Orbit4_1}, from which we can see that the number of maximum-thrust arcs is 15 with 29 switching points and that the ragularity condition in Assumption \ref{AS:Regular_Switching} at every switching point is satisfied.
\begin{figure}[!ht]
 \centering\includegraphics[ width=\textwidth, angle=0]{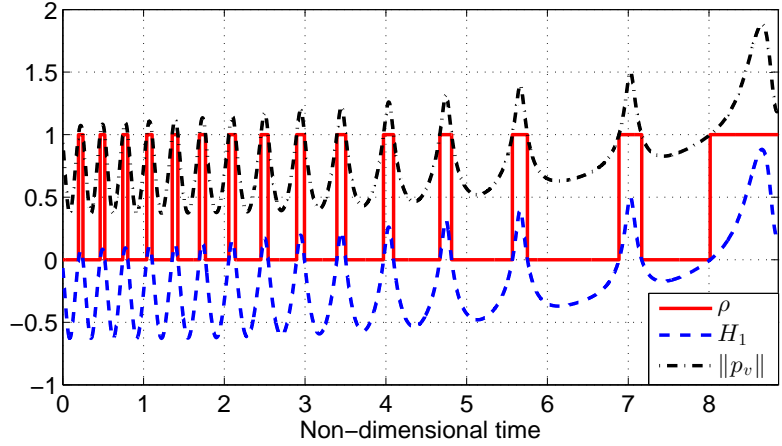}
 \caption[]{The profiles of $\rho$, $\parallel \p_v \parallel $, and $H_1$ with respect to non-dimensional time along the $L^1$-minimization trajectory.}
 \label{Fig:Transferring_Orbit4_1}
\end{figure}
Since the extremal trajectory is computed based on necessary conditions, one has to check sufficient optimality conditions  to make sure that it is at least locally optimal. According to what has been developed in Section \ref{SE:Sufficient}, it suffices to check the satisfaction of {\it Conditions} \ref{AS:Disconjugacy_bang}, \ref{AS:Transversality}, and \ref{AS:terminal_condition}. 
Using Eqs.~(\ref{EQ:Homogeneous_matrix}--\ref{EQ:update_formula_p}), one can compute $\delta(\cdot)$ on $[0,t_f]$. In order to have a clear view, the profile of $\delta(\cdot)$ on $[0,t_f]$ is rescaled 
\begin{figure}[!ht]
 \centering\includegraphics[width=\textwidth, angle=0]{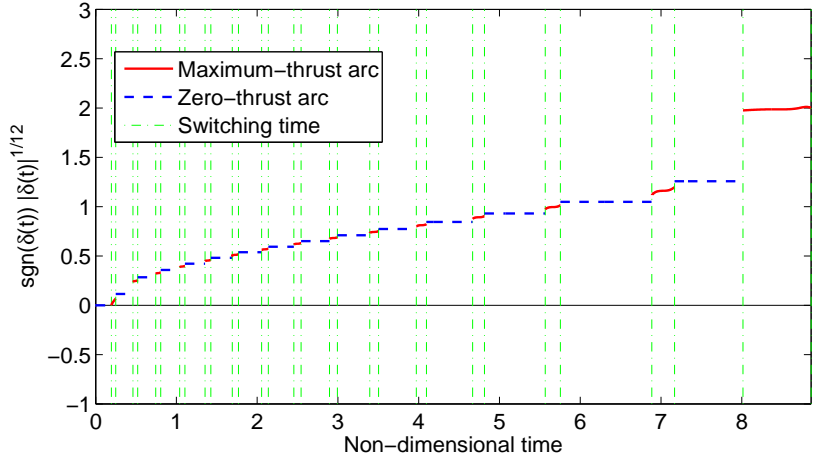}
 \caption[]{The profile of $sgn(\delta(t))|\delta(t)|^{1/12}$ with respect to non-dimensional time along the $L^1$-minimization extremal  in EMS.}
 \label{Fig:det}
\end{figure}
by $\text{sgn}(\delta(\cdot))*|\delta(\cdot)|^{1/12}$, which can capture the sign property of $\delta(\cdot)$ on $[0,t_f]$, as is illustrated in Fig. \ref{Fig:det}. We can see that there exist  no sign changes at each switching point  and no zeros on each smooth bang arc.  Thus,  Conditions \ref{AS:Disconjugacy_bang} and \ref{AS:Transversality} are satisfied along the computed extremal. To check Condition \ref{AS:terminal_condition}, differentiating $\phi(\cdot)$ in Eq.~(\ref{EQ:function_phi}) yields 
\begin{eqnarray}
{d \phi(\bar{\x}(t_f)) }=
\left[\begin{array}{ccccc}
\r(t_f) - [1-\mu,0,0]^T & \boldsymbol{0}_{3\times 1} & \v(t_f)& \1_Z& \boldsymbol{0}_{3\times 1}\\
 \boldsymbol{0}_{3\times 1} &\v(t_f) &\r(t_f) - [1-\mu,0,0]^T & \boldsymbol{0}_{3\times 1}&\1_Z
 \end{array}
\right]^T,
\label{EQ:dphi_numerical}
\end{eqnarray}
and
\begin{eqnarray}
d^2{\phi_1(\bar{\x}(t_f))} &=& \left(\begin{array}{cc} I_3 &\boldsymbol{0}_3\\
\boldsymbol{0}_3 & \boldsymbol{0}_3
\end{array}\right),\ \ d^2{\phi_2(\bar{\x}(t_f))}= \left(\begin{array}{cc} \boldsymbol{0}_3 &\boldsymbol{0}_3\\
\boldsymbol{0}_3 & I_3
\end{array}\right),\nonumber\\
d^2{\phi_3(\bar{\x}(t_f))} &=& \left(\begin{array}{cc} \boldsymbol{0}_3 & I_3\\
I_3 & \boldsymbol{0}_3
\end{array}\right),\ \ d^2{\phi_4(\bar{\x}(t_f))} = d^2{\phi_5(\bar{\x}(t_f))}= \boldsymbol{0}_6,\nonumber
\end{eqnarray}
where $\phi_i(\cdot):\mathcal{X}\rightarrow \mathbb{R},\ \x\mapsto \phi_i(\x)$ for $i=1,2,\cdots,l$ are the elements of the vector-valued function $\phi(\x)$.
Then, substituting the values of $\bar{\x}(t_f)$ and $\bar{\p}(t_f)$ into Eq.~(\ref{EQ:numerical_nu}), the vector $\bar{\boldsymbol{\nu}}$ can be computed. Up to now, except the matrix $\boldsymbol{C}$, all the quantities in Eq.~(\ref{EQ:positive_definite}) are obtained. Actually, one can use a Gram-Schmidt process to compute the matrix $\boldsymbol{C}$ associated with the matrix in Eq.~(\ref{EQ:dphi_numerical}). Then, substituting numerical values into Eq.~(\ref{EQ:positive_definite}), we obtain 
$$\boldsymbol{C}^T\left\{\frac{\partial \p^T(t_f,\bar{\p}_0)}{\partial \p_0} \left[ \frac{\partial \x(t_f,\bar{\p}_0)}{\partial \p_0}\right]^{-1} - \bar{\boldsymbol{\nu}} d^2\phi(\bar{\x}(t_f))\right\}\boldsymbol{C} \approx 0.5292 \succ 0.$$
Thus, Condition \ref{AS:terminal_condition} is satisfied. Note that the dimension of the submanifold $\mathcal{M}$ is one, it follows that the smooth curve $\y(\cdot)\in\mathcal{M}\cap\mathcal{N}$ on $[-\varepsilon,\varepsilon]$ for every $\varepsilon > 0$ is a one-dimensional curve restricted on the final circular orbit around the Moon. 
Fig. \ref{Fig:J_xi}
\begin{figure}[!ht]
 \centering\includegraphics[width=\textwidth, angle=0]{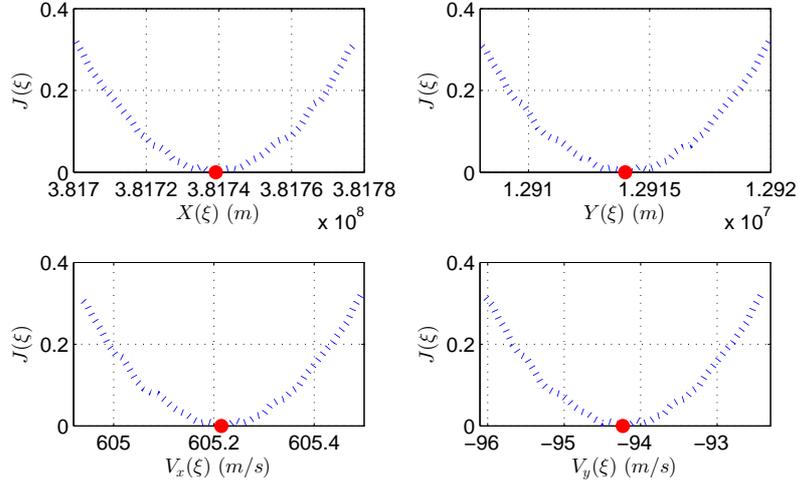}
 \caption[]{Let $X(\xi)$ and $Y(\xi)$ be the projection of the position vector $\r(\xi)$ on $X$- and $Y$-axis of the rotating frame $OXYZ$, respectively, and let $V_x(\xi)$ and $V_y(\xi)$ be the projection of the velocity vector $\v(\xi)$ on $X$- and $Y$-axis of the rotating frame $OXYZ$, respectively. The figure plots the profiles $J(\xi)$ with respect to $X(\xi)$, $Y(\xi)$, $V_x(\xi)$, and $V_y(\xi)$. The dots on each plot denote $(J(0),\y(0))$.}
 \label{Fig:J_xi}
\end{figure}
 illustrates the profile of $J(\cdot)$ with respect to $\y(\cdot)\in\mathcal{M}\cap\mathcal{N}$ in a small neighbourhood of $\bar{\x}(t_f)$. we can clearly see that $J(\cdot)>J(0)$ on $[-\varepsilon,\varepsilon]\backslash\{0\}$.  Up to now,  all the conditions in Theorem \ref{TH:optimality} are satisfied. So, the computed $L^1$-minimization trajectory realizes  a strict strong-local optimality in $C^{0}$-topology.

\section{Conclusions}

In this paper, the PMP is first employed to formulate the Hamiltonian system of the $L^1$-minimization problem for the translational motion of a spacecraft in the CRTBP, showing that the optimal control functions can exhibit bang-bang and singular behaviors. Moreover, the singular extremals are of at least order two, revealing the existence of Fuller or chattering phenomena. To establish the sufficient optimality conditions, a parameterized family of extremals is constructed. As a result of analyzing the projection behavior of this family, we obtain that conjugate points may occur not only on maximum-thrust arcs between switching times but also at switching times. Directly applying the theory of field of extremals, we obtain that the disconjugacy conditions (cf. Conditions \ref{AS:Disconjugacy_bang} and \ref{AS:Transversality})  are sufficient to guarantee an extremal to be locally optimal if the endpoints are fixed. For the case that the dimension of the final constraint submanifold is not zero, we establish a further second-order condition (cf. Condition \ref{AS:terminal_condition}), which is a necessary and sufficient one for the strict strong-local optimality of a bang-bang extremal if disconjugacy conditions are satisfied. In addition, the numerical implementation for these three sufficient optimality conditions is derived. Finally, an example of transferring a spacecraft from a circular orbit around the Earth to an orbit around the Moon is computed and the second-order sufficient optimality conditions developed in this paper are tested to show that the computed extremal realizes a strict strong-local optimum.  The sufficient optimality conditions for open-time problems will be considered in future work.

\end{document}